\documentclass{article}
\usepackage{inputenc,amsmath}
\usepackage{bbm,mathtools,amsthm,mathrsfs}
\usepackage[english]{babel}
\usepackage{graphicx}

\graphicspath{ {./images/} }
\newtheorem{theorem}{Theorem}[section]

\newtheorem{lemma}[theorem]{Lemma}
\newcommand{\defeq}{\vcentcolon=}
\newcommand{\pd}{\text{ .}}
\newcommand{\icol}[1]{
  \left(\begin{smallmatrix}#1\end{smallmatrix}\right)%
}

\DeclarePairedDelimiter\abs{\lvert}{\rvert}%
\makeatletter
\let\oldabs\abs
\def\abs{\@ifstar{\oldabs}{\oldabs*}}
\theoremstyle{definition}
\newtheorem{definition}{Definition}[section]

\title{Random walks, word metric and orbits distribution on the plane }
\author{Uriya Pumerantz}
\begin{document}

\maketitle
\tableofcontents
\section{Introduction}
Given a countably infinite group $G$ acting on some space $X$, an increasing family of finite subsets $G_n$ and $x\in X$, a natural question to ask is what asymptotical distribution the sets $G_nx$ form. More formally, we define for a function $f$ over $X$ the sums $S_n(f,x)=\sum_{g\in G_n}f(gx)$ and ask whether exists a function $\Psi(n):\mathbbm{N}\to\mathbbm{R}$ such that the sequence $\Psi(n)S_n(f,x)$ converges. This is a delicate problem that was studied under various settings \cite{Ledrappier,Nogueira,Gorodnik,Maucourant,MacWeiss}. The following work started with intentions of solving this problem for the linear action of lattices in $SL(2,\mathbbm{R})$ over $\mathbbm{R}^2$ when elements are chosen using a word metric. While not reaching a solution, some discoveries were made for the same problem in slightly different settings. These discoveries not only shed light in our initial problem, but are also quite interesting for their on sake, and are therefore brought here in detail.\\\\
We first study the action of a specific lattice in $PSL(2,\mathbbm{Z})$ on the projective line, with $G_n$ defined using a carefully chosen word metric. The asymptotic distribution is calculated and shown to be tightly connected to Minkowski's question mark function \cite{MinkFunc}, a fractal function which is usually studied in the field of Diophantine approximations. We proceed to show that the limit distribution is stationary with respect to a random walk on $G$ defined by a certain measure $\mu$. We further prove a stronger result stating that the asymptotic distribution is the limit point for any probability measure over the projective line pushed forward by the convolution power $\mu^{*n}$.\\\\
The work on the projective line shows that for certain random walks and word metrics the resulting asymptotical distribution is the same. But while a word metric raises algebraic difficulties, a random walk is sometimes simpler to handle. It is therefore reasonable to study random walks in order to draw conclusion regarding the word metric. The second part is devoted to the asymptotic distribution problem when elements are chosen using random walk driven by the action of a lattice in $SL(2,\mathbbm{R})$ acting on the plane. We show some calculations under very restrictive assumptions that offer partial solutions. While a decisive answer is not found, we offer a natural variant of the problem that seems both easier to solve and gives rise to an interesting object. We reach a solution for this variant that holds under specific conditions, and show numerical calculations which suggest that those conditions hold for the group studied in the first part of our work.\\\\
This research was conducted under the supervision of Prof. Barak Weiss, to whom I wish to extend my gratitude for a most resourceful guidance. The results in the second part have been accepted for publication in "Uniform Distribution Theory" journal. The results in the last part have not been submitted as they are still partial.
\section{Word metric, Farey group and the projective line}
\subsection{Settings and results}
The group $G=PSL(2,\mathbbm{R})$ has a natural action on the projective line $X=P(\mathbbm{R}^2)$ which stems from the linear action on $\mathbbm{R}^2$. For $g =[\icol{a \ \ b\\c\ \ d}]\in G$ and $[\icol{x\\y}]=x\in X$ the $G$ action on $X$ is defined by
$$gx=[\icol{ax+by\\cx+dy}]\pd$$
This action is well defined and does not depend of the choice of representatives in either $X$ or $G$. We shall explore a specific famous subgroup of $PSL(2,\mathbbm{Z})$.
\begin{definition}
\label{FGdef}
The subgroup of $PSL(2,\mathbbm{Z})$ generated by
$$a=[\icol{1\ \ -2\\1\ \ -1}] \ ;\ b=[\icol{0\ \ -1\\1\ \ 0}]\ \ ;\ c=[\icol{1\ \ -1\\2\ \ -1}] $$ is called the \textit{Farey group}.
\end{definition}
As briefly described in the introduction, we study asymptotic distribution of orbits, when elements are chosen using word metric. The following theorem states the main result for the current chapter.
\begin{theorem}
\label{WordLimit}
Let $\Gamma$ be the Farey group. Let $||\cdot||$ denote the word metric with respect to $\{a,b,c\}$ and set $\Gamma_n=\{\gamma\in\Gamma:||\gamma||=n\}$. For $x\in X = P(\mathbbm{R}^2)$, the projective line, $f:X\xrightarrow{}\mathbbm{R},n\in\mathbbm{N}$ we define
$$S_n(f,x)=\sum_{\gamma\in\Gamma_n}f(\gamma x)\pd$$
Then there exists a measure $\mu_{\bar{\mathcal{M}}}$ on $X$ such that for every $x\in X$ and every continuous $f$,
$$\lim_{n\to\infty}\frac{S_n(f,x)}{|\Gamma_n|}=\int_Xfd\mu_{\bar{\mathcal{M}}}\pd$$
\end{theorem}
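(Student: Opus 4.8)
The plan is to exploit the fact that all three generators are involutions: one checks directly that $a^2=b^2=c^2=1$ in $PSL(2,\mathbbm{R})$, so the Farey group is a free product $\Gamma=\langle a\rangle * \langle b\rangle * \langle c\rangle\cong(\mathbbm{Z}/2\mathbbm{Z})^{*3}$. I would prove this by a ping-pong argument on $X\cong\mathbbm{R}\cup\{\infty\}$: exhibit three arcs $X_a,X_b,X_c$ with pairwise disjoint interiors covering $X$ and satisfying $a(X_b\cup X_c)\subseteq X_a$ together with the two cyclic analogues. This simultaneously yields the free-product structure and the fact that every $\gamma\in\Gamma$ has a unique reduced word $s_1\cdots s_n$ with $s_{i+1}\neq s_i$. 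Consequently reduced words are counted by a "two choices per step" rule, so $|\Gamma_n|=3\cdot 2^{n-1}$, and the number of reduced words of length $n$ extending a fixed admissible prefix of length $k$ is exactly $2^{n-k}$, independently of the prefix.

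I would then recast the statement as weak-$*$ convergence of the empirical measures $\nu_n^x\defeq|\Gamma_n|^{-1}\sum_{\gamma\in\Gamma_n}\delta_{\gamma x}$, uniformly in $x$, to a single measure $\mu_{\bar{\mathcal{M}}}$. The natural candidate for the limit is the push-forward under a boundary map $\pi$ of the Markov measure on infinite reduced words $s_1 s_2\cdots$ (with $s_{i+1}\neq s_i$) having uniform initial law $(1/3,1/3,1/3)$ and off-diagonal transition probabilities $1/2$; this is precisely the law of a uniformly random word of large length. The map $\pi$ sends $s_1 s_2\cdots$ to $\lim_{k}(s_1\cdots s_k)x_0$, and its existence, continuity, and the existence of $\mu_{\bar{\mathcal{M}}}$ all rest on the cylinders $C_{s_1\cdots s_k}\defeq\overline{(s_1\cdots s_k)(X\setminus X_{s_k})}$ being nested with $\operatorname{diam}C_{s_1\cdots s_k}\to 0$.

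The analytic heart, and the main obstacle, is this \emph{uniform} shrinking of cylinders. Each generator is elliptic (no real fixed point), hence an isometry-type map that does not contract, so contraction can only come from admissible pairs: a short computation shows that the product $st$ of any two distinct generators is hyperbolic, with $|\operatorname{tr}(st)|=3>2$, so reading an alternating word from the left composes hyperbolic maps that pull points toward attracting fixed points. I would make this quantitative by bounding the conformal derivative factor $|cz+d|^{-2}$ of an admissible length-$k$ product on the relevant arc, obtaining $\operatorname{diam}C_{s_1\cdots s_k}\le C\lambda^{k}$ with $\lambda<1$ independent of the prefix. The delicate point is that this bound must hold simultaneously for \emph{every} admissible transition pattern; I would secure it from the finiteness of the transition graph on $\{a,b,c\}$ together with strict contraction on each ping-pong arc, with the only genuine care being the innermost application, where the base point may sit on the boundary between arcs.

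Granting the uniform estimate, the theorem follows by a single Riemann-sum comparison. Fix a continuous, hence uniformly continuous, $f$ and $\varepsilon>0$, and pick $k$ so large that $f$ oscillates by less than $\varepsilon$ on each $C_w$ with $|w|=k$. For every $n>k$ and every $x$, group the elements $\gamma\in\Gamma_n$ by their length-$k$ prefix $w$: by ping-pong the point $\gamma x$ lies in $C_w$ regardless of $x$, the number of such $\gamma$ equals $2^{n-k}$, giving each prefix the weight $\mu_{\bar{\mathcal{M}}}(C_w)=2^{1-k}/3$, and therefore $\int_X f\,d\nu_n^x=\sum_{|w|=k}\mu_{\bar{\mathcal{M}}}(C_w)f(p_w)+O(\varepsilon)$ for reference points $p_w\in C_w$. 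Since the same finite sum approximates $\int_X f\,d\mu_{\bar{\mathcal{M}}}$ to within $O(\varepsilon)$, we obtain $\bigl|\int_X f\,d\nu_n^x-\int_X f\,d\mu_{\bar{\mathcal{M}}}\bigr|\le C\varepsilon$ for all $n>k$ and all $x$, which is exactly the asserted limit $S_n(f,x)/|\Gamma_n|\to\int_X f\,d\mu_{\bar{\mathcal{M}}}$.
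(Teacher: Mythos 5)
Your overall architecture --- ping-pong on the arcs $X_a=[1,\infty]$, $X_b=[-\infty,0]$, $X_c=[0,1]$, the free product structure, the count $2^{n-k}$ of length-$n$ reduced words over a fixed length-$k$ prefix, and the Riemann-sum comparison against the Markov measure on reduced words --- is sound and genuinely different from the paper's route, and your candidate limit does coincide with $\mu_{\bar{\mathcal{M}}}$ (a depth-$k$ cylinder is an interval $[p,q]$ with $p,q$ a Farey pair in $\mathscr{F}_{k-1}$, of Minkowski mass $\frac{1}{3}2^{1-k}$). But the step you call the analytic heart is false as stated: the cylinders do \emph{not} satisfy $\operatorname{diam}C_{s_1\cdots s_k}\le C\lambda^{k}$ for any $\lambda<1$. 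The Farey group contains parabolic elements; concretely
$$cab=\pm\icol{1\ \ 0\\3\ \ 1},\qquad (cab)^j(z)=\frac{z}{3jz+1}\text{ ,}$$
so $(cab)^j$ is a reduced word of length $3j$ whose cylinder is $[0,\tfrac{1}{3j}]$, of diameter exactly $\tfrac{1}{3j}$. The mechanism you propose cannot be repaired: the ping-pong transition maps have derivative of modulus exactly $1$ at the shared endpoints of the arcs (e.g. $|c'(0)|=1$), so there is no contraction factor bounded away from $1$ on any arc, and finiteness of the transition graph does not help. Fortunately your Riemann-sum step only needs $\sup_{|w|=k}\operatorname{diam}C_w\to0$ (measured in the metric of $\mathbbm{P}^1$, so that the cylinders $[m,\infty]$ are also small), and this is true with rate $O(1/k)$: it follows not from hyperbolicity but from the arithmetic of Farey pairs, since a depth-$k$ cylinder is $[\frac{p}{q},\frac{p'}{q'}]$ with $p'q-pq'=1$ and $q+q'\ge k+1$ (cf.\ Lemma \ref{WordFareyLength}), hence has length $\frac{1}{qq'}\le\frac{2}{k+1}$. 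You must substitute this polynomial statement, with this different proof, for the exponential one.

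A second, smaller inaccuracy: it is not true that $\gamma x\in C_w$ ``regardless of $x$,'' and the exceptional set is not confined to base points on arc boundaries. If $x$ lies in the \emph{interior} of $X_{s_n}$, the innermost application throws $x$ out of $X_{s_n}$ (e.g.\ $c(\tfrac12)=\infty\notin X_c$), and the failure can propagate outward through several letters. What saves you is that at each stage the next ``bad'' letter is forced because the interiors are disjoint, so for fixed $x$ and $k$ at most $2^{k}$ of the $3\cdot2^{n-1}$ elements of $\Gamma_n$ fail to have $\gamma x\in C_w$; since $f$ is bounded this contributes $O(2^{k-n})$ and is harmless, but the estimate has to be made rather than waved at. For comparison, the paper sidesteps both issues by first proving the claim only for $x\in\{0,1,\infty\}$, where $\gamma x$ is always a vertex of $\Delta_\gamma$ and the count is exact via Lemma \ref{TilesContain}, and then transferring to arbitrary $x$ by an explicit bound on $|\gamma x-\gamma0|$ in terms of the matrix entries of $\gamma$; your uniform-in-$x$ Riemann sum would replace that entire second step, which is an attractive simplification once the two gaps above are closed.
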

The measure $d\mu_{\bar{\mathcal{M}}}$, named the extended Minkowski measure, is expressed explicitly in the next section. We proceed to show that the extended Minkowski measure is in fact stationary with respect to a specific random walk on $X$.
\begin{theorem}
\label{MinkStationary}
The extend Minkowski measure $\mu_{\bar{\mathcal{M}}}$ is stationary with respect to the random walk generated by $\mu(\{a\})=\mu(\{b\})=\mu(\{c\})=\frac{1}{3}$.
\end{theorem}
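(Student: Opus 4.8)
The plan is to reduce the stationarity identity to a combinatorial count on word-spheres, which Theorem~\ref{WordLimit} then converts into the desired equality of integrals. Recall that $\mu_{\bar{\mathcal{M}}}$ is $\mu$-stationary precisely when $\mu_{\bar{\mathcal{M}}}=\frac{1}{3}\bigl(a_*\mu_{\bar{\mathcal{M}}}+b_*\mu_{\bar{\mathcal{M}}}+c_*\mu_{\bar{\mathcal{M}}}\bigr)$, i.e. when
$$\int_X f\,d\mu_{\bar{\mathcal{M}}}=\frac{1}{3}\sum_{s\in\{a,b,c\}}\int_X f\circ s\,d\mu_{\bar{\mathcal{M}}}$$
for every continuous $f$. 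Fixing a basepoint $x\in X$ and applying Theorem~\ref{WordLimit} to each of the continuous functions $f\circ a,\,f\circ b,\,f\circ c$ (continuity holds since each generator acts continuously on $X$), the right-hand side becomes
$$\frac{1}{3}\lim_{n\to\infty}\frac{1}{|\Gamma_n|}\sum_{\gamma\in\Gamma_n}\sum_{s\in\{a,b,c\}}f\bigl((s\gamma)x\bigr),$$
so it suffices to understand how left multiplication by a generator moves elements between the spheres $\Gamma_{n-1},\Gamma_n,\Gamma_{n+1}$.

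The structural input I would use is that $a,b,c$ are involutions in $PSL(2,\mathbbm{Z})$ (a one-line matrix check gives $a^2=b^2=c^2=e$) and that the Farey group is the free product $\mathbbm{Z}/2\ast\mathbbm{Z}/2\ast\mathbbm{Z}/2$ on these three factors; this is the same structure underlying the computation of $|\Gamma_n|$ and of $\mu_{\bar{\mathcal{M}}}$. Consequently the Cayley graph with respect to $\{a,b,c\}$ is a tree, $\Gamma_n$ consists exactly of the reduced (alternating, no repeated letter) words of length $n$, one has $|\Gamma_n|=3\cdot 2^{\,n-1}$ and hence $|\Gamma_{n\pm1}|/|\Gamma_n|\to 2^{\pm1}$. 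For $\gamma\in\Gamma_n$ with first letter $\ell(\gamma)$, the product $s\gamma$ has length $n-1$ when $s=\ell(\gamma)$ and length $n+1$ for the other two generators.

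Carrying out the count, the $|\Gamma_n|$ pairs with $s=\ell(\gamma)$ realize each element of $\Gamma_{n-1}$ exactly twice, while the remaining pairs realize each element of $\Gamma_{n+1}$ exactly once; thus
$$\sum_{\gamma\in\Gamma_n}\sum_{s\in\{a,b,c\}}f\bigl((s\gamma)x\bigr)=2\,S_{n-1}(f,x)+S_{n+1}(f,x).$$
Dividing by $|\Gamma_n|$ and writing $S_{m}(f,x)/|\Gamma_n|=\bigl(S_m(f,x)/|\Gamma_m|\bigr)\cdot\bigl(|\Gamma_m|/|\Gamma_n|\bigr)$, the sphere-size ratios tend to $\tfrac12$ and $2$ for $m=n-1$ and $m=n+1$, so by Theorem~\ref{WordLimit} the right-hand side converges to $\frac{1}{3}\bigl(2\cdot\tfrac12+2\bigr)\int_X f\,d\mu_{\bar{\mathcal{M}}}=\int_X f\,d\mu_{\bar{\mathcal{M}}}$, which is the left-hand side.

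I expect the only genuine difficulty to lie in the structural claim that the word metric is tree-like, i.e. that $\{a,b,c\}$ freely generate a free product of involutions with no further relations; this is what makes the length bookkeeping exact and turns the problem into the short limit above. If it is not already available from the analysis preceding Theorem~\ref{WordLimit}, I would establish it by a ping-pong argument on $X=P(\mathbbm{R}^2)$ using the fixed-point data of the three elliptic involutions. The involution property is also what keeps the stationarity bookkeeping clean, since then $s^{-1}=s$ and convolution by the step distribution is exactly the sphere-shifting operation analyzed above.
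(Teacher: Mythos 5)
Your proposal is correct and follows essentially the same route as the paper's proof: the paper likewise applies Theorem~\ref{WordLimit} to $f\circ a$, $f\circ b$, $f\circ c$, uses Lemma~\ref{ReducedRep} (the free-product-of-involutions structure, already established via Poincar\'e's theorem) to rewrite the triple sum as $S_{n+1}(f,0)+2S_{n-1}(f,0)$, and passes to the limit using $|\Gamma_n|=3\cdot 2^{n-1}$. The structural claim you flag as the only potential difficulty is indeed already available in the paper, so no ping-pong argument is needed.
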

Stationary measures have great importance in dynamics. Particularly relevant to this work are results by Furstenberg \cite{Benoist} showing existence and uniqueness of stationary measure for certain random walks on projective spaces. While there is a general result guaranteeing the existence of such a measure, it is rare to be able to explicitly express one. In the particular setting studied here, using unique properties of the Farey group, the stationary measure is not only explicitly expressed, but also shown to have a connection to a function from a seemingly unrelated area. Notice that there are no $\Gamma$ invariant measures on $X$, thus the extended Minkowski measure is stationary but not invariant with respect to $\Gamma$.\\\\
Lastly, we show general conditions on a random walk under which the word metric limit and the stationary measure coincide. As a direct consequence of the proof we see that the extended Minkowski measure is also the limit measure of any probability measure on $X$ pushed forward by the nth convolution power
$$\mu^{*n}\defeq\underbrace{\mu*\mu*...*\mu}_{\text{n times}}\pd$$
\begin{theorem}
\label{MinkConv}
In the settings of Theorem \ref{MinkStationary}, for any probability measure $\pi$ on $X$, the following limit exists in the weak-$\ast$ topology:
$$\lim_{n\to\infty}\mu^{*n}*\pi=\mu_{\bar{\mathcal{M}}}\pd$$
\end{theorem}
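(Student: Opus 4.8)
The plan is to obtain the convergence not by a direct computation with $\mu^{*n}$, but by combining the uniqueness of the stationary measure with the contraction (proximality) properties of the walk. By Theorem \ref{MinkStationary} the measure $\mu_{\bar{\mathcal{M}}}$ is already $\mu$-stationary, so the first task is to show it is the \emph{only} stationary probability measure on $X$. For this I would check that the Farey group $\Gamma$, which is exactly the group generated by $\mathrm{supp}(\mu)=\{a,b,c\}$, is strongly irreducible and proximal as a subgroup of $SL(2,\mathbbm{R})$. Proximality is immediate once one exhibits a hyperbolic element: for instance $ab=\icol{-2\ \ -1\\-1\ \ -1}$ has trace $-3$, hence eigenvalues of distinct modulus. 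Strong irreducibility follows because $\Gamma$ is non-elementary, hence Zariski dense in $SL(2,\mathbbm{R})$, so it preserves no finite union of lines. Furstenberg's theorem \cite{Benoist} then yields uniqueness of the stationary measure, forcing any $\mu$-stationary measure to equal $\mu_{\bar{\mathcal{M}}}$.

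The second and main step is to upgrade stationarity and uniqueness to genuine convergence of $\mu^{*n}*\pi$ from an arbitrary starting measure $\pi$. I would realise the convolution probabilistically: writing $W_n=g_1g_2\cdots g_n$ for i.i.d.\ increments $g_i\sim\mu$, one has, for every continuous $f$,
$$\int_X f\,d(\mu^{*n}*\pi)=\mathbbm{E}_\omega\!\left[\int_X f\,d\bigl((W_n)_*\pi\bigr)\right]\pd$$
The contraction part of Furstenberg's theory asserts that, almost surely, the normalised products $W_n/\|W_n\|$ converge to a rank-one matrix, so that $(W_n)_*\pi\to\delta_{z(\omega)}$ weakly for a random point $z(\omega)\in X$ whose law is exactly the stationary measure $\mu_{\bar{\mathcal{M}}}$. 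Applying bounded convergence to the displayed identity gives $\int_X f\,d(\mu^{*n}*\pi)\to\mathbbm{E}[f(z(\omega))]=\int_X f\,d\mu_{\bar{\mathcal{M}}}$, and since $f$ ranges over all continuous functions this is precisely the asserted weak-$\ast$ convergence.

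I expect the main obstacle to lie in the contraction step, specifically in controlling $(W_n)_*\pi$ for a completely arbitrary $\pi$: concentration at $z(\omega)$ can fail only if $\pi$ charges the random repelling direction of $W_n$, so one must argue this exceptional event is negligible, which is where non-atomicity of $\mu_{\bar{\mathcal{M}}}$ and strong irreducibility re-enter. An alternative, more self-contained route that fits the structure of this chapter exploits that $a,b,c$ are involutions: the walk is then a symmetric nearest-neighbour walk on the cubic Cayley graph of $\Gamma$, and since $\Gamma$ is non-amenable (it contains free subgroups) the walk has positive speed and converges almost surely to a point of the geometric boundary. Transporting this boundary convergence to $X$ through the equivariant boundary map, and comparing the result with the sphere averages of Theorem \ref{WordLimit}, identifies the limiting law with $\mu_{\bar{\mathcal{M}}}$. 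In either route the final closure is the standard compactness argument: $X$ compact makes $\{\mu^{*n}*\pi\}$ weak-$\ast$ precompact, every limit point is stationary by continuity of convolution (via a Cesàro average), and uniqueness forces every limit point to be $\mu_{\bar{\mathcal{M}}}$, so the whole sequence converges.
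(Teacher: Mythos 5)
Your main argument is sound but follows a genuinely different route from the paper. The paper never invokes contraction or martingale convergence: it observes that because $a,b,c$ are involutions and $\mu$ is uniform on them, $\mu^{*n}$ assigns equal mass to all elements of the same word length (the ``evenly distributed'' property), so that $\mu^{*n}*\delta_x(f)=\sum_m \mu_{n,m}S_m(f,x)$ is a convex combination of the normalised sphere averages $S_m(f,x)/|\Gamma_m|$ with weights escaping to large $m$; Theorem \ref{WordLimit} then forces each of these to converge to $\mu_{\bar{\mathcal{M}}}(f)$, and integrating over $\pi$ finishes (Lemma \ref{Gen1} and Theorem \ref{WordStationary}). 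Your route instead combines Furstenberg uniqueness with the almost sure convergence $(W_n)_*\pi\to\delta_{z(\omega)}$ for the right walk $W_n=g_1\cdots g_n$ and bounded convergence. This is heavier machinery but more general: it would apply to any strongly irreducible proximal $\mu$ on $PSL(2,\mathbbm{R})$, whereas the paper's argument is tied to the word-metric convergence and the special combinatorics of $\mathbbm{Z}/2*\mathbbm{Z}/2*\mathbbm{Z}/2$; conversely the paper's argument is elementary and exhibits the limit directly as the word-metric measure rather than identifying it post hoc via uniqueness. Your worry about $\pi$ charging the repelling direction is resolved cleanly by Fubini: for each fixed $x$ one has $W_nx\to z(\omega)$ almost surely, hence for almost every $\omega$ the convergence holds for $\pi$-almost every $x$, which suffices for dominated convergence; no non-atomicity of $\mu_{\bar{\mathcal{M}}}$ is needed.

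One genuine error to remove: your closing ``standard compactness argument'' does not work, and the paper explicitly warns against it. Subsequential weak-$\ast$ limits of the sequence $\mu^{*n}*\pi$ are \emph{not} automatically stationary; only limit points of the Ces\`aro averages $\frac{1}{n}\sum_{k\le n}\mu^{*k}*\pi$ are, and uniqueness of the stationary measure therefore yields only Ces\`aro convergence, not convergence of the sequence itself (consider a period-two situation to see the obstruction in general). Your martingale route does not need this closure---dominated convergence already gives convergence of the full sequence---so simply delete that final step rather than rely on it.
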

Notice that for any random walk defined by a measure $\nu$ on a compact space with unique stationary measure, the Ces\`aro limit $\frac{1}{n}\sum_{k=1}^n\nu^{*k}*\pi$ converges in weak-$\ast$ topology to the stationary measure. However, the existence of a Ces\'aro limit does not imply Theorem \ref{MinkConv}. 
\subsection{Preliminaries}
\subsubsection{Farey group and tessellation}
We first describe a construction of the hyperbolic Farey tessellation $\mathscr{T}$. For $\frac{p_1}{q_1},\frac{p_2}{q_2}\in\mathbbm{Q}$  with $\gcd(p_i,q_i)=1$ we define $\frac{p_1}{q_1}\oplus\frac{p_2}{q_2}=\frac{p_1+p_2}{q_1+q_2}$. The ordered Farey sequences $\mathscr{F}_n=(s^n_1,s^n_2,...,s^n_{k(n)})$ are then constructed using the recurrence relation
$$\mathscr{F}_0=(0,1)=(s_1^0,s_2^0)\text{ ,}$$
$$\mathscr{F}_{n+1}=(s^n_1,s^n_1\oplus s^n_2,s^n_2,s^n_2\oplus s^n_3,s_3,...,s^n_{k(n)-1}\oplus s^n_{k(n)},s^n_{k(n)})\pd$$
The following lemma summarizes useful well known facts regarding the Farey sequences \cite{MinkFunc}:
\begin{lemma}
\label{FareySeq}
$ $
\begin{enumerate}
    \item $\cup_{n=1}^\infty \mathscr{F}_n=\mathbbm{Q}\cap[0,1]$.
    \item Every $q \in \mathbbm{Q}\cap [0,1]$ appears no more than once in any $\mathscr{F}_n$.
\end{enumerate}
\end{lemma}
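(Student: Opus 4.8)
The plan is to extract both claims from a single algebraic invariant: the unimodularity of neighboring terms. First I would prove, by induction on $n$, that any two adjacent entries $\frac{p_i}{q_i}<\frac{p_{i+1}}{q_{i+1}}$ of $\mathscr{F}_n$ satisfy $p_{i+1}q_i-p_iq_{i+1}=1$. The base case $\mathscr{F}_0=(\frac{0}{1},\frac{1}{1})$ gives $1\cdot1-0\cdot1=1$. For the inductive step, inserting the mediant $\frac{p_i}{q_i}\oplus\frac{p_{i+1}}{q_{i+1}}=\frac{p_i+p_{i+1}}{q_i+q_{i+1}}$ replaces one adjacent pair by two, and a one-line determinant computation shows each again has determinant $1$, e.g. $(p_i+p_{i+1})q_i-p_i(q_i+q_{i+1})=p_{i+1}q_i-p_iq_{i+1}=1$, so the invariant propagates.

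From this invariant I would then read off several facts with no extra work. Any common divisor of a numerator and its denominator divides the determinant $1$, so every entry is automatically in lowest terms (and so is each mediant). Denominators remain positive by an easy induction, whence consecutive differences equal $\frac{p_{i+1}}{q_{i+1}}-\frac{p_i}{q_i}=\frac{1}{q_iq_{i+1}}>0$; thus each $\mathscr{F}_n$ is strictly increasing, which is exactly Claim 2, since a strictly monotone list has no repetitions. A further induction (the mediant of two numbers lies strictly between them) shows every entry lies in $[0,1]$, giving $\mathscr{F}_n\subseteq\mathbbm{Q}\cap[0,1]$, which is one inclusion of Claim 1.

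The substantive part is the reverse inclusion. I would take $\frac{p}{q}\in\mathbbm{Q}\cap[0,1]$ in lowest terms (the endpoints lie in $\mathscr{F}_0$, so assume $0<p<q$) and track, as $n$ grows, the unique consecutive pair $\frac{a}{b}\le\frac{p}{q}\le\frac{c}{d}$ of $\mathscr{F}_n$ that brackets it. Combining $bc-ad=1$ with the integrality estimates $bp-aq\ge1$ and $cq-dp\ge1$, the identity $q=d(bp-aq)+b(cq-dp)$ yields the crucial cap $b+d\le q$, with equality precisely when $\frac{p}{q}$ is the mediant $\frac{a+c}{b+d}$. At each refinement step, if $\frac{p}{q}$ is not yet present it lies strictly inside one of the two sub-intervals, so one bracketing neighbor is replaced by the mediant and the sum $b+d$ strictly increases, while the cap keeps it $\le q$. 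A strictly increasing sequence of positive integers bounded by $q$ must terminate, and termination can only occur when $\frac{p}{q}$ coincides with an inserted mediant, placing it in some $\mathscr{F}_N$.

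I expect the main obstacle to be precisely this last step: isolating the right monovariant — the sum of the bracketing denominators — and proving the bound $b+d\le q$ cleanly from the unimodular relation, so that termination is forced. By contrast, the invariant of the first paragraph does all the heavy lifting, and the monotonicity, reducedness, and containment statements then follow as essentially formal consequences.
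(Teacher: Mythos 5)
Your argument is correct, and it is the standard Stern--Brocot/unimodularity proof of these facts. The paper itself offers no proof: it states the lemma as ``well known'' and cites \cite{MinkFunc}, so there is nothing to compare against line by line. For the record, your single invariant $p_{i+1}q_i-p_iq_{i+1}=1$ does carry all the weight you claim: it gives reducedness of every entry (so the mediant operation $\oplus$ is applied to the canonical lowest-terms representatives, avoiding any circularity in the paper's definition), strict monotonicity of each $\mathscr{F}_n$ via $\frac{p_{i+1}}{q_{i+1}}-\frac{p_i}{q_i}=\frac{1}{q_iq_{i+1}}>0$ (which is Claim 2), and the containment $\mathscr{F}_n\subset\mathbbm{Q}\cap[0,1]$. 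The surjectivity step is also sound: for $\frac{p}{q}$ in lowest terms strictly bracketed by consecutive entries $\frac{a}{b}<\frac{p}{q}<\frac{c}{d}$, the identity $q=d(bp-aq)+b(cq-dp)$ together with $bp-aq\ge1$, $cq-dp\ge1$ gives $b+d\le q$, while each refinement strictly increases the bracketing denominator sum, so the process terminates only by $\frac{p}{q}$ appearing as a mediant. One tiny presentational point: Claim 2 as stated concerns repetitions within a single $\mathscr{F}_n$, which strict monotonicity settles immediately, exactly as you say; no more is needed.
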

Since for any Farey pair $p<q$, the inequality $p<p\oplus q<q$ holds, the ordered sequences $\mathscr{F}_n$ are in fact ordered using the usual order on the real line.
\theoremstyle{definition}
\begin{definition}
Two rationals $p,q\in\mathbbm{Q}$ with $p<q$ are called a \textit{Farey pair} if they are successive terms in some $\mathscr{F}_n$. That is, if exists $n\in\mathbbm{N}$ and $i\in\mathbbm{N}$ such that $p=s^n_i$ and $q=s^n_{i+1}$.
\end{definition} 
Let $\mathbbm{H}\defeq\{z\in\mathbbm{C}:Im(z)>0\}$ be the upper complex half plane equipped with the hyperbolic metric. For a detailed description of the hyperbolic upper half plane model see \cite{Beardon}. For any two points in the boundary $s,t\in\partial\mathbbm{H}=\mathbbm{R}\cup \{\infty\}$, $s\neq t$ we denote by $l(s,t)\subset\mathbbm{H}$ the unique infinite hyperbolic geodesic in $\mathbbm{C}$ that has $\{s,t\}$ in his closure and define $$\mathscr{T}_0=\bigcup_{\substack{(p,q) \\ \text{ Farey pair}} }l(p,q)\cup l(0,\infty)\cup (1,\infty)\text{ ,}$$
where the union is over all Farey pairs $(p,q)$. $\mathscr{T}_0$ is a set of boundary curves of a tessellation by ideal hyperbolic triangles of the region $\{z\in\mathbbm{H}:0\le Re(z) \le 1\}$. Notice that the word "triangle" is used here to describe both the edges of such element and the interior of it. The exact meaning is obvious from the context and should cause no confusion. To complete this to a tessellation of the entire hyperbolic plane we define $\tilde{\mathscr{T}}$ using integral translations of  $\mathscr{T}_0$,
$$\tilde{\mathscr{T}}=\bigcup_{n\in\mathbbm{Z}}(n+\mathscr{T}_0)\pd$$
One can check that $\tilde{\mathscr{T}}$ is a set of boundary curves of a tessellation of $\mathbbm{H}$. This tessellation is called the Farey tessellation. We denote $T(p,q,r)\defeq l(p,q)\cup l(q,r) \cup l(p,r)$ the ideal hyperbolic triangle with vertices at $-\infty<p<q<r\le \infty$. We refer to any ideal hyperbolic triangle $T(p,q,r)\subset \tilde{\mathscr{T}}$ as a "Farey tile" or simply as a "tile". We denote by $\mathscr{T}$ the set of all Farey tiles. Set $\Delta_e=T(0,1,\infty)\in\mathscr{T}$, and  let $\{a,b,c\}$ be as in definition \ref{FGdef}. One can check that $\{a,b,c\}$ are hyperbolic reflections on the edges of $\Delta_e$. By Poincar\'e's Theorem \cite{Maskit}, the representation of the Farey group $\Gamma$ in terms of generators and relations is $<a,b,c\ \ |\ \ a^2=b^2=c^2>$. The following lemma states that $\tilde{\mathscr{T}}$ can be described both in terms of Farey sequences and as $\Gamma$ orbit of $\Delta_e$.
\begin{lemma}
\label{lemma1}
 $$\tilde{\mathscr{T}}=\Gamma \Delta_e=\bigcup_{\gamma \in \Gamma}\gamma\Delta_e\pd$$
\end{lemma}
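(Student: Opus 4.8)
The plan is to establish the two set inclusions $\Gamma\Delta_e \subseteq \tilde{\mathscr{T}}$ and $\tilde{\mathscr{T}} \subseteq \Gamma\Delta_e$ separately, viewing each side as a set of ideal triangles, equivalently as the union of their edges inside $\mathbbm{H}$. For the first inclusion it suffices to show that each generator preserves the tessellation, i.e. $a\tilde{\mathscr{T}} = b\tilde{\mathscr{T}} = c\tilde{\mathscr{T}} = \tilde{\mathscr{T}}$; since $\Delta_e \in \tilde{\mathscr{T}}$, a straightforward induction on word length then gives $\gamma\Delta_e \subseteq \tilde{\mathscr{T}}$ for every $\gamma \in \Gamma$. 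I would prove preservation algebraically. A Farey pair $\frac{p_1}{q_1} < \frac{p_2}{q_2}$ in lowest terms is characterized by the unimodularity relation $|p_1 q_2 - p_2 q_1| = 1$, a relation one checks is propagated by the mediant recursion defining the $\mathscr{F}_n$ and therefore holds for all successive terms. Consequently the edges of $\tilde{\mathscr{T}}$ are exactly the geodesics $l\!\left(\frac{p_1}{q_1},\frac{p_2}{q_2}\right)$ joining unimodular pairs, with vertex set $\mathbbm{Q}\cup\{\infty\}$.

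Each of $a,b,c$ lies in $SL(2,\mathbbm{Z})$, so its Möbius action permutes $\mathbbm{Q}\cup\{\infty\}$ bijectively and carries primitive integer vectors to primitive integer vectors; moreover, since $\det = 1$, it preserves the quantity $\det(v_1,v_2)$ formed from the integer column vectors representing two boundary points. Hence it maps unimodular pairs to unimodular pairs, therefore edges to edges and tiles to tiles, so $a,b,c$ (and thus all of $\Gamma$) preserve $\tilde{\mathscr{T}}$. This settles $\Gamma\Delta_e \subseteq \tilde{\mathscr{T}}$.

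For the reverse inclusion I would show that $\Gamma$ acts transitively on the tiles. As already noted, $a,b,c$ are the edge involutions of $\Delta_e$ associated with $l(1,\infty),l(0,\infty),l(0,1)$; each sends $\Delta_e$ to the tile sharing the corresponding edge, for instance $b\colon z\mapsto -1/z$ interchanges $0$ and $\infty$ and sends $T(0,1,\infty)$ to $T(-1,0,\infty)$. Thus $a\Delta_e, b\Delta_e, c\Delta_e$ are precisely the three tiles adjacent to $\Delta_e$. By homogeneity, for any $\gamma \in \Gamma$ the three neighbors of $\gamma\Delta_e$ are $\gamma a\Delta_e,\gamma b\Delta_e,\gamma c\Delta_e$, all lying in $\Gamma\Delta_e$, so the orbit is closed under passing to adjacent tiles. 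Since the dual graph of $\tilde{\mathscr{T}}$ (tiles as nodes, shared edges as links) is connected — any two interior points of $\mathbbm{H}$ are joined by a compact path crossing only finitely many edges, by local finiteness of the tessellation — the orbit, which contains $\Delta_e$ and is adjacency-closed, must exhaust all tiles. This yields $\tilde{\mathscr{T}}\subseteq\Gamma\Delta_e$, completing the equality.

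The step I expect to be the main obstacle is the preservation argument: making rigorous that the combinatorial and geometric data of $\tilde{\mathscr{T}}$ are captured exactly by the unimodularity relation, so that its $SL(2,\mathbbm{Z})$-invariance transfers to invariance of the whole tessellation. Concretely, one must verify that the mediant construction of the $\mathscr{F}_n$ produces all and only the unimodular pairs inside $[0,1]$, and that the integral translates comprising $\tilde{\mathscr{T}}$ extend this to all of $\mathbbm{Q}\cup\{\infty\}$; here Lemma \ref{FareySeq} provides the needed control over which rationals occur and with what multiplicity. Once the identification of $a,b,c$ with the three edge involutions of $\Delta_e$ is in hand, the transitivity step is comparatively routine.
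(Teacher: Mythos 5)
Your proposal is correct, but note that the paper contains no proof of this lemma at all --- it simply cites Series \cite{Series} --- so your write-up supplies a self-contained argument where the paper defers to the literature. Your route is the standard one: (i) identify the edges of $\tilde{\mathscr{T}}$ with the geodesics joining unimodular pairs in $\mathbbm{Q}\cup\{\infty\}$, so that invariance of the tessellation under $PSL(2,\mathbbm{Z})$, hence under $\Gamma$, is automatic; (ii) obtain the reverse inclusion from adjacency-closedness of the orbit together with connectedness of the dual graph. Both halves are sound: one checks directly that $a,b,c$ send $\Delta_e=T(0,1,\infty)$ to $T(1,2,\infty)$, $T(-1,0,\infty)$ and $T(0,\tfrac12,1)$ respectively, so they are exactly the three neighbor-swaps, and local finiteness of the tessellation gives the connectivity you invoke. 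The step you rightly flag as the real content --- that consecutive terms of the mediant-generated $\mathscr{F}_n$ are precisely the unimodular pairs in $[0,1]$, and that every unimodular pair of finite rationals is an integral translate of such a pair --- is classical: the recursion visibly preserves $|p_1q_2-p_2q_1|=1$, the converse follows by descent using Lemma \ref{FareySeq}, and no unimodular pair straddles an integer strictly, since $|p/q-r/s|=1/(qs)\ge 1/q+1/s$ would force $q+s\le 1$. One cosmetic remark: $a,b,c$ are orientation-preserving elliptic involutions (rotations by $\pi$ about edge midpoints), not literal hyperbolic reflections as the paper loosely states; your term ``edge involutions'' is the accurate one, and all that your argument needs is that each swaps $\Delta_e$ with the tile across one edge.
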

A proof of lemma \ref{lemma1} can be found in \cite{Series}.
\theoremstyle{definition}
\begin{definition}
Two tiles $\Delta_g,\Delta_h\in\mathscr{T}$ are called \textit{neighbors} if they share a common edge, that is $\Delta_g\cap\Delta_h=l(s,t)$ for some $s,t\in\partial\mathbbm{H}$.
\end{definition} 
\begin{lemma}
\label{AlgebraicNgb}
$\Delta_g$ is a neighbor of $\Delta_h$ if and only if $h=gs$ with $s\in\{a,b,c\}$.
\end{lemma}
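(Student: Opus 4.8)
The plan is to prove both directions by translating the geometric neighbor relation into the algebra of the Farey group via the tessellation, using the already-established facts that $\{a,b,c\}$ are the hyperbolic reflections in the three edges of $\Delta_e = T(0,1,\infty)$ and that $\tilde{\mathscr{T}} = \Gamma\Delta_e$ (Lemma \ref{lemma1}). The first step is to record that the edges of the base tile $\Delta_e$ are exactly the geodesics $l(0,\infty)$, $l(1,\infty)$ and $l(0,1)$, and that each of $a,b,c$ fixes one of these edges setwise and swaps $\Delta_e$ with the unique tile sharing that edge. Concretely I would check which generator corresponds to which edge by computing the action on the two fixed boundary points, and verify that $s\Delta_e$ is the reflected neighbor across the corresponding edge for each $s\in\{a,b,c\}$. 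This establishes the claim for the base tile: the three neighbors of $\Delta_e$ are precisely $a\Delta_e,\,b\Delta_e,\,c\Delta_e$.

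The key idea for the general statement is equivariance. Writing $\Delta_g = g\Delta_e$ and $\Delta_h = h\Delta_e$ (legitimate since $\Gamma$ acts transitively on tiles by Lemma \ref{lemma1}), I would use that $\Gamma$ acts on $\mathbb{H}$ by isometries, hence maps tiles to tiles and preserves the neighbor relation. So $\Delta_g$ and $\Delta_h$ are neighbors if and only if $g^{-1}\Delta_g = \Delta_e$ and $g^{-1}\Delta_h = g^{-1}h\,\Delta_e$ are neighbors, i.e.\ if and only if $\Delta_e$ and $\Delta_{g^{-1}h}$ are neighbors. By the base case this happens exactly when $g^{-1}h\,\Delta_e$ is one of the three neighbors of $\Delta_e$, which by the previous paragraph means $g^{-1}h\,\Delta_e = s\Delta_e$ for some $s\in\{a,b,c\}$. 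Reducing the word-metric picture to the base tile in this way is the conceptual heart of the argument.

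To finish I would need to pass from $g^{-1}h\,\Delta_e = s\Delta_e$ to $g^{-1}h = s$, i.e.\ to $h = gs$. This requires that the stabilizer of $\Delta_e$ in $\Gamma$ is trivial: if $\gamma\Delta_e = \Delta_e$ then $\gamma = e$. I would justify this from the Poincaré presentation $\langle a,b,c \mid a^2=b^2=c^2=e\rangle$ together with the fact that $\Delta_e$ is a fundamental domain for the $\Gamma$-action on the tessellation, so distinct group elements give distinct tiles and the action on $\mathscr{T}$ is simply transitive. Granting this, $g^{-1}h\,\Delta_e = s\Delta_e$ forces $g^{-1}h = s$, completing the equivalence in both directions.

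The main obstacle I anticipate is the last step, establishing that the action on tiles is free (trivial tile-stabilizer), since the reflections $a,b,c$ are orientation-reversing and one must be careful that a nontrivial product could in principle fix $\Delta_e$ setwise while permuting its vertices. I would handle this by appealing to the tessellation being a genuine fundamental-domain decomposition coming from Poincaré's theorem, which guarantees that the tiles are in bijection with the group elements, so no nontrivial $\gamma$ can stabilize $\Delta_e$. The forward and backward implications then follow symmetrically from the equivariance reduction above.
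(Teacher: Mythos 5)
Your proposal is correct and follows essentially the same route as the paper: the forward direction via equivariance ($\Delta_{gs}=g\Delta_s$ is a neighbor of $g\Delta_e$ because isometries preserve the tessellation), and the converse by exhausting the three neighbors of a tile. The paper's converse is just the terse count ``three neighbors, three generators,'' which implicitly uses the simple transitivity of $\Gamma$ on tiles that you spell out explicitly; your version is the more careful rendering of the same argument.
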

\begin{proof}
If $h=gs$ then since $\Delta_s$ is a neighbor of $\Delta_e$ and since isometries move geodesics to geodesics, it follows that $\Delta_h=\Delta_{gs}=gs\Delta_e=g\Delta_s$ is a neighbor of $\Delta_g=g\Delta_e$. As each tile has exactly 3 neighbors and there are exactly 3 generators the condition is necessary. 
\end{proof} 
\subsubsection{Structure of the Farey tessellation}
In this section we prove some results regarding the structure of the group $\Gamma$.
\theoremstyle{definition}
\begin{definition}
Let $G$ be a group generated by a set $S\subset G$. Define the word metric on $G$ as follows: for any $g\neq e$ by $||g||_S=\min(\{n|g=s_1s_2...s_n,s_i\in S\})$ and $||e||_S=0$.
\end{definition}
Throughout this paper, we omit the subscript $S$ when the generating set is implied. The following general lemma is a direct consequence of corollary 1.4.8 in \cite{CoexterCombi}.
\begin{lemma}
\label{ReducedRep}
Let $G$ be a group defined in term of generators and relations by $<a_1,...,a_n\ \ |\ \ a_i^2=e>$. Then each $g\in G$ has a unique representation $g=s_1...s_n$ with $m=||g||$ and $s_i\in\{a_1,...,a_n\}$. This representation has $s_i\neq s_{i+1}$ for $1\le i < m$. 
\end{lemma}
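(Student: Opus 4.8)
The plan is to recognize $G$ as the free product $\underbrace{\mathbbm{Z}/2\mathbbm{Z} * \cdots * \mathbbm{Z}/2\mathbbm{Z}}_{n}$ and to prove the claimed normal form directly, rather than quoting the Coxeter combinatorics. Call a word $s_1 \cdots s_m$ over the alphabet $\{a_1,\ldots,a_n\}$ \emph{reduced} if $s_i \neq s_{i+1}$ for all $1 \le i < m$, with the empty word (representing $e$) counted as reduced. The assertion then splits into three parts: every $g$ equals some reduced word, that reduced word is unique, and its length equals $||g||$.

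First I would dispatch existence and the length statement together, which are the easy parts. Starting from any expression $g = t_1 \cdots t_k$ of minimal length $k = ||g||$, I claim it is already reduced: if $t_i = t_{i+1}$ for some $i$, then $t_i t_{i+1} = t_i^2 = e$ by the defining relation, so deleting this adjacent pair yields a shorter expression for $g$, contradicting minimality. Hence every $g$ admits a reduced expression whose length is exactly $||g||$, and the property $s_i \neq s_{i+1}$ holds automatically. (For existence alone one may instead begin with any word and repeatedly cancel adjacent equal letters, a process that terminates since each cancellation strictly decreases length.)

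The main obstacle is uniqueness, for which I would use van der Waerden's permutation trick. Let $W$ be the set of all reduced words. For each generator $a_j$ define a map $\sigma_j : W \to W$ by $\sigma_j(s_1 \cdots s_m) = s_2 \cdots s_m$ when $s_1 = a_j$, and $\sigma_j(s_1 \cdots s_m) = a_j\, s_1 \cdots s_m$ otherwise (so $\sigma_j$ sends the empty word to $a_j$). A short case check shows that each $\sigma_j$ is well defined, mapping $W$ into $W$, and is an involution, i.e. $\sigma_j \circ \sigma_j = \mathrm{id}_W$. Consequently the assignment $a_j \mapsto \sigma_j$ respects the relations $a_j^2 = e$ and extends to a homomorphism $\Phi : G \to \mathrm{Sym}(W)$. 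Evaluating $\Phi(g)$ at the empty word, where $g = s_1 \cdots s_m$ is reduced, applies the $\sigma_{s_i}$ from right to left and, because consecutive letters differ, prepends them one at a time to return $s_1 \cdots s_m$ itself. Thus two reduced words representing the same $g$ induce the same permutation and hence coincide, giving uniqueness.

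Combining the three parts, every $g$ has exactly one reduced expression, its length realizes $||g||$, and it satisfies $s_i \neq s_{i+1}$, which is precisely the lemma. The only delicate point is the verification that each $\sigma_j$ is well defined and involutive on $W$: one must check that prepending $a_j$ to a reduced word not beginning with $a_j$ again produces a reduced word, and that the two branches of the definition are mutually inverse. Everything else is bookkeeping.
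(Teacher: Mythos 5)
Your proof is correct, but it takes a genuinely different route from the paper: the paper gives no argument at all, simply citing Corollary 1.4.8 of \cite{CoexterCombi} (general Coxeter-group machinery), whereas you prove the normal form from scratch by recognizing $G$ as the free product $\mathbbm{Z}/2\mathbbm{Z}\ast\cdots\ast\mathbbm{Z}/2\mathbbm{Z}$ and running van der Waerden's permutation trick. Your existence-and-length step (a minimal-length word cannot contain an adjacent repeated letter, else cancellation shortens it) and your uniqueness step (the involutions $\sigma_j$ on the set of reduced words satisfy the defining relations, hence induce $\Phi:G\to\mathrm{Sym}(W)$, and evaluating $\Phi(g)$ at the empty word recovers the reduced word) are both complete; the well-definedness and involutivity checks you flag as the delicate point do go through, since a suffix of a reduced word is reduced and prepending $a_j$ to a reduced word not beginning with $a_j$ keeps it reduced. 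What your approach buys is self-containedness and elementarity; what it costs is generality --- the argument leans on the fact that this presentation has no braid relations (it is the universal Coxeter group), and in a general Coxeter group reduced expressions are far from unique, so the citation route is the one that situates the lemma in the broader theory while yours is the sharper ad hoc proof for exactly this group.
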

One can see that $\Gamma$ satisfies the assumptions in Lemma \ref{ReducedRep}.
\begin{definition}
A representation that satisfies the conditions of Lemma \ref{ReducedRep} is called a \textit{reduced representation}.
\end{definition}
 The following lemmas summarize some important observations regarding the Farey tessellation.
\begin{figure}[h]
\centering
\includegraphics[width=\textwidth]{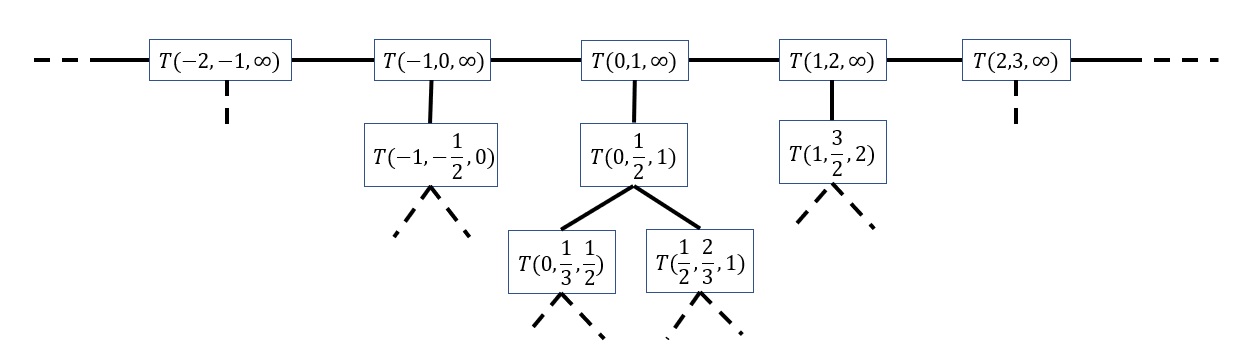}
\caption{An illustration of the Farey tessellation structure}
\end{figure}

\begin{lemma}
\label{NgbTiles}
Let $g\in\Gamma,g\neq e$ with a reduced representation $s_1...s_n$ and let $s\in\{a,b,c\}$ such that $s\neq s_n$. Then:
\begin{enumerate}
    \item If $\Delta_g=T(m,m+1,\infty)$ and $m>0$ then $\Delta_{gs_n}=T(m-1,m,\infty)$ and either $\Delta_{gs}=T(m+1,m+2,\infty)$ or $\Delta_{gs}=T(m,m\oplus (m+1),m+1)$.
    \item If $\Delta_g=T(m,m+1,\infty)$ and $m<0$ then $\Delta_{gs_n}=T(m+1,m+2,\infty)$ and either $\Delta_{gs}=T(m-1,m,\infty)$ or $\Delta_{gs}=T(m,m\oplus (m+1),m+1)$.
    \item If $\Delta_g=T(q_1,q_2,q_3)$ then either $\Delta_{gs_n}=T(q_1,q_3,r)$ or $\Delta_{gs_n}=T(r,q_1,q_3)$ or $\Delta_{gs_n}=T(q_1,q_3,\infty)$ with $r\in\mathbbm{Q}$ such that $q_3 = q_1\oplus r$ or $q_1=r\oplus q_3$ respectively, and either $\Delta_{gs}=T(q_1,q_1\oplus q_2,q_2)$ or $\Delta_{gs}=T(q_2,q_2\oplus q_3,q_3)$.
\end{enumerate}
Notice that for every $g\in\Gamma$ either $\Delta_g=(m,m+1,\infty)$ or $\Delta_g=T(q_1,q_2,q_3)$ with $m\in\mathbbm{Z}$ and $q_i\in\mathbbm{Q}$. The case $m=0$ has $\Delta_g=\Delta_e$, which makes it trivial.
\end{lemma}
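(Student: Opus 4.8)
The plan is to establish all three cases at once by strong induction on the word length $n=||g||$, with the combinatorial structure of the Farey tessellation as the engine. First I would record the basic \emph{mediant} structure: by the recursive construction of the sequences $\mathscr{F}_n$ and Lemma \ref{lemma1}, every tile can be written $T(q_1,q_2,q_3)$ with $q_1<q_2<q_3$ (allowing $q_3=\infty=\tfrac{1}{0}$), with middle vertex the mediant of the outer two, $q_2=q_1\oplus q_3$, and with consecutive vertices unimodular Farey neighbors. Granting this, the neighbor across each edge is forced: across the two edges meeting the mediant vertex, $l(q_1,q_2)$ and $l(q_2,q_3)$, the opposite tiles are $T(q_1,q_1\oplus q_2,q_2)$ and $T(q_2,q_2\oplus q_3,q_3)$, whereas across the remaining edge $l(q_1,q_3)$ the opposite tile has third vertex $w$ pinned down by requiring the mediant relation to persist, namely $q_3=q_1\oplus w$, $q_1=w\oplus q_3$, or $w=\infty$. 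This neighbor rule is exactly the list of outcomes appearing in the statement, so the task reduces to deciding, for each tile, which neighbor is the parent $\Delta_{gs_n}$ and which two are the children.

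For the base case $n=1$ I would compute the M\"obius actions $a:x\mapsto\tfrac{x-2}{x-1}$, $b:x\mapsto\tfrac{-1}{x}$, $c:x\mapsto\tfrac{x-1}{2x-1}$ on $\Delta_e=T(0,1,\infty)$, obtaining $\Delta_a=T(1,2,\infty)$, $\Delta_b=T(-1,0,\infty)$, $\Delta_c=T(0,\tfrac12,1)$, and check these against the three templates directly. For the inductive step I would use the reduced-word structure of Lemmas \ref{ReducedRep} and \ref{AlgebraicNgb}: writing $g=hs_n$ with $||h||=n-1$, the tile $\Delta_g$ is a child of $\Delta_h$ and its parent is exactly $\Delta_{gs_n}=\Delta_h$; moreover for each $s\neq s_n$ the word $s_1\cdots s_n s$ is reduced, so $||gs||=n+1$ and the two tiles $\Delta_{gs}$ are genuinely the children of $\Delta_g$, sitting across the two edges of $\Delta_g$ not shared with $\Delta_h$. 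The inductive hypothesis applied to $h$ supplies the vertices of $\Delta_h$ together with the edge across which $\Delta_g$ was produced; this determines the vertices of $\Delta_g$, identifies its parent edge, and then the neighbor rule applied to the other two edges yields the children, which I compare with the statement.

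The bookkeeping splits by transition type, since an $\infty$-tile has exactly one $\infty$-tile child and one finite child while a finite tile has two finite children: I would treat $\infty$-to-$\infty$, $\infty$-to-finite, and finite-to-finite separately. The main obstacle is the asymmetry of the spine tiles $T(m,m+1,\infty)$. For every finite tile, and for a spine tile with $m>0$, the parent lies across the long edge $l(q_1,q_3)$ (for $m>0$ this is $l(m,\infty)$, giving parent $T(m-1,m,\infty)$); but for $m<0$ this reverses, as the long-edge neighbor $T(m-1,m,\infty)$ becomes a child while the parent $T(m+1,m+2,\infty)$ sits across the short edge $l(m+1,\infty)$, because the root $\Delta_e=T(0,1,\infty)$ lies in the middle of the spine and its two arms point inward toward it. Carrying the sign of $m$, and hence the correct parent edge, through the induction — verifying that it is preserved at the $\infty$-to-$\infty$ transitions ($m\mapsto m+1$ on the positive arm, $m\mapsto m-1$ on the negative arm) and seeded correctly by $\Delta_a,\Delta_b$ from $\Delta_e$ — is the delicate part; everything else is routine mediant arithmetic dictated by the neighbor rule.
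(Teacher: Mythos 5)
Your proof is correct, but it reaches the conclusion by a different mechanism than the paper's. Both arguments begin identically: Lemma \ref{AlgebraicNgb} identifies the three neighbors of $\Delta_g$ as $\Delta_{ga},\Delta_{gb},\Delta_{gc}$, and the mediant construction of the tessellation forces these to be the two subdivisions $T(q_1,q_1\oplus q_2,q_2)$, $T(q_2,q_2\oplus q_3,q_3)$ together with one ``wide'' tile across $l(q_1,q_3)$. Where you diverge is in deciding which neighbor is the parent $\Delta_{gs_n}$. You run an upward induction from $\Delta_e$, carrying as the inductive datum the edge across which the parent sits and verifying that this invariant propagates through the $\infty$-to-$\infty$, $\infty$-to-finite and finite-to-finite transitions. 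The paper instead argues by contradiction and descent: if $\Delta_{gs_n}$ were a mediant child, then --- since the wide neighbor of each successive tile is the tile one just left, reached by repeating the last generator, which reducedness forbids --- each further truncation $gs_n, gs_ns_{n-1},\dots$ would again land in a mediant child, so the intervals $[q_1^i,q_3^i]$ would form a strictly descending chain; yet after $n$ steps one must arrive at $\Delta_e=T(0,1,\infty)$ with interval $[0,1]$, contradicting $q_3-q_1<1$. The paper writes this out only for case 3 and declares the spine cases ``identical,'' whereas your version makes the $m>0$ versus $m<0$ asymmetry on the spine fully explicit (the parent edge flips from $l(m,\infty)$ to $l(m+1,\infty)$ because $\Delta_e$ sits in the middle of the spine) --- precisely the point the paper glosses over --- at the cost of more transition-type bookkeeping; the paper's descent buys uniformity in case 3 by needing only the monotonicity of interval lengths rather than a parent-edge invariant.
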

\begin{proof}
We will show a proof for the third case only. The other two cases are proved using identical reasoning. It follows from Lemma \ref{AlgebraicNgb} that the neighbors of $\Delta_g$ are $\Delta_{ga},\Delta_{gb},\Delta_{gc}$. The construction of the Farey sequences, as described in terms of Farey sequences, implies that those tiles correspond to $T(q_1,q_1\oplus q_2,q_2),T(q_2,q_2\oplus q_3,q_3)$ and either $T(q_1,q_3,r)$ or $T(r,q_1,q_3)$ or $T(q_1,q_3,\infty)$ with $r\in\mathbbm{Q}$ such that $q_3 = q_1\oplus r$ or $q_1=r\oplus q_3$ respectively. Assume by contradiction $T_{gs_n}=T(q_1,q_1\oplus q_2,q_2)\defeq T(q_1^1,q_2^1,q_3^1)$. Then, since $s_n\neq s_{n-1}$ either $T_{gs_ns_{n-1}}=T(q_1,q_1\oplus(q_1\oplus q_2),q_1\oplus q_2)\defeq T(q_1^2,q_2^2,q_3^2)$ or $T_{gg_ng_{n-1}}=T(q_1\oplus q_2, (q_1\oplus q_2)\oplus q_2, q_2)\defeq T(q_1^2,q_2^2,q_3^2)$. Notice that $||gs_n||=||g||-1$. In a similar way we may keep shortening $g$ until reaching $\Delta_{gs_n...s_1}=\Delta_e=T(0,1,\infty)= T(q_1^n,q_2^n,q_3^n)$. The intervals $[q_1^i,q_3^i]$ form a descending filtration and thus $1-0=q_3^n-q_1^n<q_3-q_1<1$, arriving at a contradiction. By same method we see $T_{gs_n}\neq T(q_2,q_2\oplus q_3,q_3)$, so the only possibility is that $\Delta_{gs_n}$ is either $T(q_1,q_3,r),T(r,q_1,q_3)$ or $T(q_1,q_3,\infty)$. The rest of the claim regarding the two other generators follows immediately.
\begin{figure}[h]
\centering
\includegraphics[width=\textwidth]{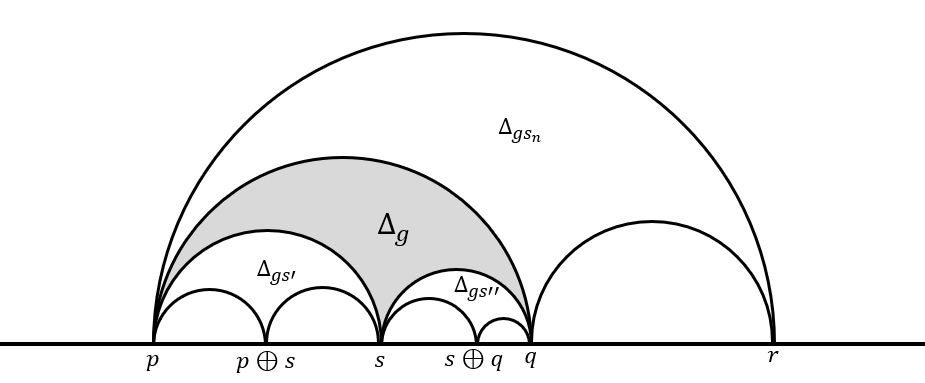}
\caption{A tile with finite vertices}
\end{figure}
\begin{figure}[h]
\centering
\includegraphics[width=\textwidth]{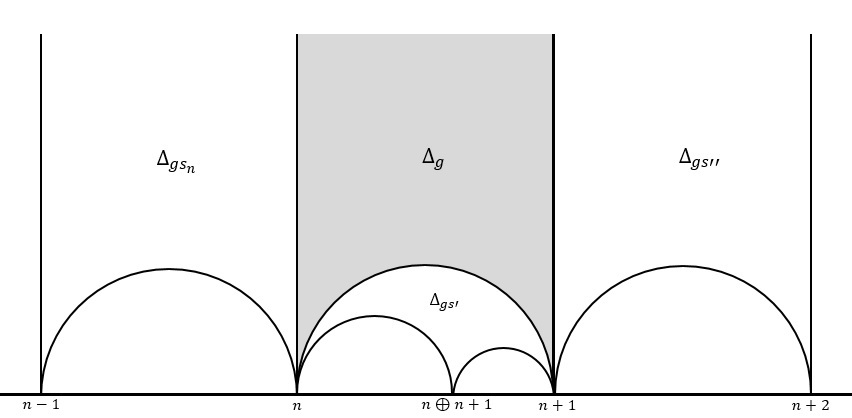}
\caption{A tile with vertex at $\infty$}
\end{figure}
\end{proof}
\begin{lemma}
\label{TilesContain}
Let $g,h \in \Gamma$ with reduced representations $g=g_n...g_1,h=h_m...h_1$ and $n\le m$. Denote $\Delta_g=T(p_1,p_2,p_3)$ and $\Delta_h=T(q_1,q_2,q_3)$ with $p_3,q_3<\infty$. Then $[q_1,q_3]\subset [p_1,p_3]$ if and only if $h_i=g_i$ for all $1\le i \le n$.
\end{lemma}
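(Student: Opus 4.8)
The plan is to translate the algebraic condition into the geometry of the Farey tree and then prove two containments. First I would reformulate the statement: by Lemma~\ref{AlgebraicNgb} and the description of the tree path $\Delta_e,\Delta_{s_1},\Delta_{s_1s_2},\dots$ obtained by appending one generator on the right at each step, the condition ``$h_i=g_i$ for $1\le i\le n$'' says exactly that the reduced word of $g$ is an initial segment of that of $h$; equivalently, $\Delta_h$ lies in the subtree of the Farey tessellation rooted at $\Delta_g$. I would also record one auxiliary fact used throughout: a tile with all finite vertices is determined by its base interval, because by the construction of the Farey sequences its middle vertex is forced to be the mediant $q_1\oplus q_3$. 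In particular this settles the equality case $[q_1,q_3]=[p_1,p_3]$, which forces $g=h$.

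For the easy direction (initial segment $\Rightarrow$ containment) I would induct along the path from $\Delta_g$ down to $\Delta_h$. Since $p_3<\infty$, the tile $\Delta_g$ is finite, and by Lemma~\ref{NgbTiles}(3) both children of a finite tile are again finite; hence every tile on this descending path is finite and case~(3) applies at each step. That case states that the two non-parent neighbors of $T(q_1,q_2,q_3)$ are $T(q_1,q_1\oplus q_2,q_2)$ and $T(q_2,q_2\oplus q_3,q_3)$, whose base intervals $[q_1,q_2]$ and $[q_2,q_3]$ are contained in $[q_1,q_3]$. Composing these one-step containments along the path yields $[q_1,q_3]\subseteq[p_1,p_3]$.

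The reverse direction (containment $\Rightarrow$ initial segment) is the crux. I would argue by the deepest common ancestor $\Delta_u$ of $\Delta_g$ and $\Delta_h$, i.e. the tile corresponding to the longest common initial segment $u$ of the two reduced words (uniqueness of which is guaranteed by Lemma~\ref{ReducedRep}). If neither word is an initial segment of the other, the two tree paths leave $\Delta_u$ through two distinct child edges, and I would show this makes the base intervals essentially disjoint, contradicting $[q_1,q_3]\subseteq[p_1,p_3]$. The argument splits on the type of $\Delta_u$: if $\Delta_u$ is finite, its two children split $[u_1,u_3]$ at the mediant, so $[p_1,p_3]$ and $[q_1,q_3]$ can meet only in that single mediant point; if $\Delta_u$ is a spine tile $T(k,k+1,\infty)$ or the root $\Delta_e$, then by Lemma~\ref{NgbTiles}(1),(2) its distinct children lie over disjoint unit intervals. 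Since a base interval is non-degenerate, it cannot be contained in such an intersection, giving the contradiction. Hence one word is an initial segment of the other, and the hypothesis $n\le m$ together with the determined-by-interval fact forces $g$ to be the initial segment of $h$.

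I expect the spine to be the main obstacle: unlike a finite tile, a spine tile has only one finite child while the other child continues along the spine, so the bookkeeping that ``leaving $\Delta_u$ by different edges lands in disjoint unit intervals'' must be read off carefully from cases~(1) and~(2) of Lemma~\ref{NgbTiles}. A secondary subtlety is that adjacent (sibling or cousin) tiles share a common endpoint, so the proof must use non-degeneracy of base intervals to rule out a containment realized through a single shared point.
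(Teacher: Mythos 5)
Your proposal follows essentially the same route as the paper's proof: the forward direction iterates Lemma~\ref{NgbTiles} along the descending path of tiles, and your ``deepest common ancestor'' is exactly the paper's first index $i_0$ at which the two reduced words disagree, after which both arguments place $\Delta_g$ and $\Delta_h$ inside the two essentially disjoint child intervals and use non-degeneracy of the base intervals to contradict containment. You are somewhat more explicit about the spine tiles and the equality case, which the paper dispatches as ``similar'' and ``trivial'', but the approach is the same.
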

\begin{proof}
Assume $h_i=g_i$ for all $1\le i \le n$. Then
$$\Delta_h=h_m...h_{n+1}h_n...h_1\Delta_e=h_m...h_{n+1}g\Delta_e=h_m...h_{n+1}\Delta_g\pd$$
Since $h_i\neq h_{i+1}$ for all $i$ the claim follows from Lemma \ref{NgbTiles}.\\
Let $2\le i_0 \le n$ be the first integer such that  $h_{i_0}\neq g_{i_0}$. Then $h_{i_0-1}...h_1\Delta_e=g_{i_0-1}...g_1\Delta_e=T(r_1,r_2,r_3)$. For simplicity assume $r_i<\infty$, then since $h_{i_0}\neq g_{i_0}$ and both are different from $g_{i_0-1}=h_{i_0-1}$ we get by Lemma $\ref{NgbTiles}$ that $g_{i_0}...g_1\Delta_e=T(r_1,r_1\oplus r_2,r_2)$ and $h_{i_0}...h_1\Delta_e=T(r_2,r_2\oplus r_3,r_3)$. Since $h,g$ are both given as reduced representation lemma \ref{NgbTiles} finishes the proof. The other cases where $r_3=\infty$  are proven using lemma \ref{NgbTiles} in a similar manner. The case $i_0=1$ is trivial.
\end{proof}
The next lemma gives a characterization of some Farey tiles in terms of Farey sequences.
\begin{lemma}
\label{WordFareyLength}
Let $g\in\Gamma$ such that $||g||=n>0$ and $\Delta_g=T(p,r,q)$ with $0\le p < r < q \le 1$. Then $r$ first appears in $\mathscr{F}_n$ and $p,q$ are a Farey pair in $\mathscr{F}_{n-1}$. 
\end{lemma}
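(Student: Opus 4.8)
The plan is to prove both assertions simultaneously by strong induction on the word length $n$: I pass from a tile to its length-reducing parent, apply the inductive hypothesis there, and then transport the conclusion back down one level using the mediant (child) structure supplied by Lemma \ref{NgbTiles}. The two halves of the statement are in fact linked by the mediant relation: for a finite-vertex tile one always has $r=p\oplus q$, so proving that $(p,q)$ is a Farey pair in $\mathscr{F}_{n-1}$ automatically pins down $r$ as the mediant inserted at the next level.

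For the base case $n=1$, note that among the three neighbors of $\Delta_e=T(0,1,\infty)$ the only one with all three vertices in $[0,1]$ is $T(0,\tfrac12,1)$, the reflection across $l(0,1)$ whose new vertex is $0\oplus 1=\tfrac12$. Here $p=0,\ r=\tfrac12,\ q=1$, and indeed $(0,1)$ is a Farey pair in $\mathscr{F}_0$ while $\tfrac12$ first appears in $\mathscr{F}_1$, as required.

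For the inductive step take $g$ with $\lVert g\rVert=n\ge 2$ and $\Delta_g=T(p,r,q)$, $0\le p<r<q\le 1$, and let $g'$ be $g$ with its last letter deleted, so $\lVert g'\rVert=n-1$. By Lemma \ref{NgbTiles}(3) the parent $\Delta_{g'}$ is the neighbor of $\Delta_g$ across the outer edge $l(p,q)$, while the neighbors across $l(p,r)$ and $l(r,q)$ are the two length-increasing children. The main obstacle is to verify that $\Delta_{g'}$ again has all three vertices in $[0,1]$, so that the inductive hypothesis applies. I will deduce this from the geometry of the strip $R=\{0\le \mathrm{Re}(z)\le 1\}$: the bounding geodesics $l(0,\infty)$ and $l(1,\infty)$ are themselves edges of the tessellation, so no tile crosses them, and hence the tiles contained in $R$ form a connected subgraph of the dual tree. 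Since the dual graph is a tree (it is the Cayley graph of $\Gamma$, which is a tree by the presentation and Lemma \ref{ReducedRep}, with tiles and generators corresponding to vertices and edges via Lemma \ref{AlgebraicNgb}), a connected subgraph is a convex subtree; therefore the reduced-word geodesic from $\Delta_e$ to $\Delta_g$ cannot leave $R$, and in particular $\Delta_{g'}\in R$. As $\Delta_e$ is the only tile in $R$ incident to $\infty$ and its unique neighbor inside $R$ is $T(0,\tfrac12,1)$ at depth $1$, for $n\ge 2$ we have $\Delta_{g'}\ne\Delta_e$, so $\Delta_{g'}$ has all vertices finite and in $[0,1]$.

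With the parent in range, write $\Delta_{g'}=T(p',r',q')$ with $0\le p'<r'<q'\le 1$. By the inductive hypothesis $(p',q')$ is a Farey pair in $\mathscr{F}_{n-2}$ and $r'$ first appears in $\mathscr{F}_{n-1}$; since $p'<r'<q'$, the only new element that can sit between the consecutive pair $(p',q')$ is the mediant $p'\oplus q'$, so $r'=p'\oplus q'$, the triple $p',r',q'$ is consecutive in $\mathscr{F}_{n-1}$, and both $(p',r')$ and $(r',q')$ are Farey pairs in $\mathscr{F}_{n-1}$. By Lemma \ref{NgbTiles}(3) the two children of $\Delta_{g'}$ are exactly $T(p',p'\oplus r',r')$ and $T(r',r'\oplus q',q')$, and $\Delta_g$ is one of them; in either case its outer pair $(p,q)$ equals $(p',r')$ or $(r',q')$, a Farey pair in $\mathscr{F}_{n-1}$, and its middle vertex $r=p\oplus q$ is the mediant of that pair. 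This mediant is inserted precisely in passing from $\mathscr{F}_{n-1}$ to $\mathscr{F}_n$, and by Lemma \ref{FareySeq}(2) together with the nesting $\mathscr{F}_m\subseteq\mathscr{F}_{n-1}$ for $m\le n-1$, it appears for the first time in $\mathscr{F}_n$. This is exactly the desired conclusion, completing the induction; the only delicate point is the convexity argument confining the parent to $[0,1]$, while the remainder is bookkeeping with the mediant operation.
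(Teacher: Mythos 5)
Your proof is correct and follows essentially the same induction on word length, via Lemma \ref{NgbTiles} and the mediant structure of the Farey sequences, that the paper uses. The one genuine addition is your convexity-of-subtrees argument showing that the parent tile $\Delta_{g'}$ again lies in the strip $\{0\le \mathrm{Re}(z)\le 1\}$ with finite vertices --- a step the paper's upward induction leaves implicit --- and that argument is sound.
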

\begin{proof}
We prove by induction on $||g||$. Let $g=s_1...s_n$ be the reduced representation of $g$. By Lemma \ref{NgbTiles} for $s\neq s_n$, $\Delta_{gs}=T(p,p\oplus r,r)$ or $\Delta_{gs}=T(r,r\oplus q,q)$. Lemma \ref{FareySeq} implies that both $p \oplus r$ and $r \oplus q$ first appear in $\mathscr{F}_{n+1}$. For the base case $||g||=1$, since $0\le p,r,q \le 1$ it follows that $\Delta_g=T(0,\frac{1}{2},1)$. Since $\frac{1}{2}\in\mathscr{F}_1$ and $0,1\in\mathscr{F}_0$ we are done. 
\end{proof}
The next lemma extends Lemma \ref{WordFareyLength} to all tiles.
\begin{lemma}
\label{WordFareyLength2}
Let $g\in\Gamma$ with $||g||=n>0$ and $\Delta_g=T(p,r,q)$. Then either $p=n,r=n+1,q=\infty$ or $p=-n,r=1-n,q=\infty$ or exists $m\in\mathbbm{Z}$ such that $p-m,q-m$ are Farey pair in $\mathscr{F}_{n-m-1}$ and $r-m=(p-m)\oplus(q-m)$
\end{lemma}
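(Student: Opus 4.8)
The plan is to prove the statement by induction on the word length $n$, exploiting the fact that the tessellation is organized as a ``spine'' of strip tiles $T(m,m+1,\infty)$, from each of which a finite ``fan'' of tiles hangs. Concretely I would prove a single strengthened hypothesis: for every $g$ with $\|g\|=n$, either $\Delta_g$ is one of the two strip tiles $T(n,n+1,\infty)$ or $T(-n,1-n,\infty)$, or $\Delta_g=T(p,r,q)$ is finite and lies in a unique strip $[m,m+1]$ (with $m=\lfloor p\rfloor$), in which case $p-m,q-m$ are a Farey pair in $\mathscr{F}_{n-|m|-1}$ and $r-m=(p-m)\oplus(q-m)$. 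The base case $n=1$ follows from the direct computation $\Delta_a=T(1,2,\infty)$, $\Delta_b=T(-1,0,\infty)$ and $\Delta_c=T(0,\frac12,1)$: this exhibits both strip tiles and the single finite tile, the latter giving $m=0$ and the Farey pair $0,1\in\mathscr{F}_0$ with mediant $\frac12$.

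For the inductive step I would write $g=g's_n$ with $\|g'\|=n-1$ and $s_n\neq s_{n-1}$, so that $\Delta_g$ is one of the two ``children'' of $\Delta_{g'}$ classified in Lemma \ref{NgbTiles}, and split on the type of $\Delta_{g'}$. The spine is handled first: since case 3 of Lemma \ref{NgbTiles} shows that a finite tile has only finite children, every strip tile of length $n$ is a child of a strip tile of length $n-1$; applying cases 1 and 2 to the two tiles $T(n-1,n,\infty)$ and $T(1-n,2-n,\infty)$ produces exactly $T(n,n+1,\infty)$ and $T(-n,1-n,\infty)$, which closes the strip part of the induction.

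The finite tiles then fall into two sub-cases. If the parent $\Delta_{g'}$ is a strip tile, then by cases 1--2 its finite child is $T(n-1,\frac{2n-1}{2},n)$ or $T(1-n,\frac{3-2n}{2},2-n)$; each sits in a strip with $|m|=n-1$, its integer translate into $[0,1]$ is exactly $\Delta_c=T(0,\frac12,1)$, and $n-|m|-1=0$ matches the Farey index of the pair $0,1$. If the parent is finite, lying in $[m,m+1]$ with (by hypothesis) $p'-m,q'-m$ a Farey pair in $\mathscr{F}_{(n-1)-|m|-1}$, then by case 3 both children remain in the same strip and are obtained by inserting the mediant; the Farey recurrence (Lemma \ref{FareySeq}) shows that $(p'-m,r'-m)$ and $(r'-m,q'-m)$ are Farey pairs in $\mathscr{F}_{(n-1)-|m|}=\mathscr{F}_{n-|m|-1}$ with the new vertex equal to their mediant, which is precisely the assertion for $g$. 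Equivalently, the integer translation by $m$ carries $\Delta_g$ to a tile in $[0,1]$ to which Lemma \ref{WordFareyLength} applies with word length $n'=n-|m|$.

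The heart of the argument, and the step I expect to be the main obstacle, is the bookkeeping at the junction where a strip tile spawns a fan: this is where the integer $m$ is created and then frozen for all further descendants, and where the identity $n=n'+|m|$ between the true word length and the length $n'$ of the $[0,1]$-representative first appears. One must verify that each additional step along the spine increases $|m|$ by one while resetting the internal Farey depth to $0$, so that the index $n-|m|-1$ is genuinely preserved; once this is isolated, the finite-to-finite step is merely the defining mediant recurrence. Finally, since the distance of the strip from the origin enters as $|m|$, the reflection $\rho\colon z\mapsto 1-\bar z$, which fixes $\Delta_e$, permutes $\{a,b,c\}$ (hence preserves word length), preserves the Farey tessellation, and sends the strip over $[m,m+1]$ to the strip over $[-m,-m+1]$, reduces the claim to the case $m\ge 0$; on that half the stated index $n-m-1$ coincides with $n-|m|-1$, and the left half-plane is transported by $\rho$.
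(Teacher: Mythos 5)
Your induction is correct, and since the paper leaves this proof as an exercise there is nothing to compare it against except the paper's own proof of Lemma \ref{WordFareyLength}, of which your argument is the natural extension: base case at the three generator tiles $T(1,2,\infty)$, $T(-1,0,\infty)$, $T(0,\tfrac12,1)$, inductive step via the parent/child classification of Lemma \ref{NgbTiles}, with the spine of strip tiles handled separately from the fans they spawn, and the identity $n=n'+|m|$ created exactly at the spine-to-fan junction. One substantive remark: what your induction actually establishes is that $p-m,q-m$ form a Farey pair in $\mathscr{F}_{n-|m|-1}$, not in $\mathscr{F}_{n-m-1}$ as the lemma literally states, and for $m<0$ the literal statement is false --- e.g.\ $\Delta_{ba}=T(-1,-\tfrac12,0)$ has $n=2$, $m=-1$, and $0,1$ are a Farey pair in $\mathscr{F}_0=\mathscr{F}_{n-|m|-1}$ but not in $\mathscr{F}_2=\mathscr{F}_{n-m-1}$. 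The $|m|$ version is the one the paper actually uses later (the factor $2^{-|m|}$ in the proof of Theorem \ref{WordLimit}), so you have proved the correct statement; it would be worth saying explicitly that you are correcting the index rather than folding this into the closing remark. Your final reflection argument via $z\mapsto 1-\bar z$ is sound but redundant, since cases 1 and 2 of Lemma \ref{NgbTiles} already treat the two signs of $m$ symmetrically inside the induction.
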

The proof is left as excercise for the reader.
\begin{lemma}
\label{CombiCount}
Let $\Gamma_n\defeq \{\gamma\in\Gamma:||\gamma||=n\}$. For $n \in \mathbbm{N},n>0$:
$$|\Gamma_n|=3\cdot2^{n-1}$$
and $|\Gamma_0|=1$.
\end{lemma}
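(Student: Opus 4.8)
The plan is to identify $\Gamma_n$ with a set of admissible words and count combinatorially, using the presentation $\Gamma = \langle a,b,c \mid a^2 = b^2 = c^2\rangle$ established earlier via Poincar\'e's theorem. By Lemma \ref{ReducedRep}, every $g \in \Gamma$ with $g \neq e$ possesses a unique reduced representation $g = s_1 \cdots s_m$ with $m = ||g||$, $s_i \in \{a,b,c\}$, and $s_i \neq s_{i+1}$ for $1 \le i < m$. Consequently $g \in \Gamma_n$ if and only if its reduced representation has length exactly $n$, so the entire problem reduces to counting reduced representations of length $n$.

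Concretely, I would let $W_n$ denote the set of sequences $(s_1, \ldots, s_n) \in \{a,b,c\}^n$ satisfying $s_i \neq s_{i+1}$ for all $1 \le i < n$, and define $\Phi \colon \Gamma_n \to W_n$ by sending each $g$ to its reduced representation. This $\Phi$ is well defined and injective by the uniqueness clause of Lemma \ref{ReducedRep}. For surjectivity I would take any $(s_1,\ldots,s_n) \in W_n$, form $g = s_1 \cdots s_n \in \Gamma$, and argue that $||g|| = n$, so that $g \in \Gamma_n$ and $\Phi(g) = (s_1,\ldots,s_n)$. Here $||g|| \le n$ is immediate, while $||g|| < n$ would force a shorter reduced representation of $g$, contradicting that the no-repeat word $(s_1,\ldots,s_n)$ is itself already reduced. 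Granting this, $\Phi$ is a bijection and $|\Gamma_n| = |W_n|$.

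The count of $W_n$ is then elementary: there are $3$ choices for $s_1$, and for each subsequent index the single constraint $s_i \neq s_{i-1}$ leaves exactly $2$ choices, giving $|W_n| = 3 \cdot 2^{n-1}$ for $n \ge 1$. The case $n = 0$ follows at once from $||e|| = 0$, yielding $|\Gamma_0| = 1$.

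The only step that is not purely formal is the surjectivity of $\Phi$, equivalently the claim that a no-repeat word of length $n$ genuinely realizes word length $n$ rather than collapsing to something shorter. This is exactly where the free-product structure of $\Gamma$ is used: since the defining relations are precisely the involutions $a^2 = b^2 = c^2 = e$ and nothing more, any reduction of a word must cancel a pair of equal adjacent letters, which a no-repeat word does not contain. I expect this to be the main (and only) genuine obstacle; everything else is bookkeeping. One could alternatively sidestep it by an induction proving $|\Gamma_n| = 2|\Gamma_{n-1}|$ for $n \ge 2$, observing that deleting the final letter of a reduced representation maps $\Gamma_n$ onto $\Gamma_{n-1}$ with each fiber of size $2$ (append any generator distinct from the last letter), together with the base value $|\Gamma_1| = 3$; the two formulations are equivalent.
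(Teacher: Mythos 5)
Your proposal is correct and follows essentially the same route as the paper: both invoke Lemma \ref{ReducedRep} to identify $\Gamma_n$ with the set of no-repeat words of length $n$ in $\{a,b,c\}$ and then count $3\cdot 2^{n-1}$ such words. You are somewhat more careful than the paper in justifying surjectivity (that a no-repeat word cannot collapse to a shorter element), a point the paper's proof leaves implicit, but this does not change the argument.
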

\begin{proof}
 For $n=0$, $\gamma=e$ is the only possible word hence $|\Gamma_0|=1$. For any $n\ge1$ we use Lemma \ref{ReducedRep} and count reduced representations. if $\gamma=s_1s_2....s_n$ is a reduced representation, it has $s_1\in\{a,b,c\}$ and for every $i \ge 2$, $s_i\in\{a,b,c\}\setminus\{s_{i-1}\}$. Therefore $|\Gamma_n|=3\cdot 2^{n-1}$.
\end{proof}
\subsubsection{Minkowski function and measure} \label{MinkSec}
The Minkowski question mark function was first constructed by Hermann Minkowski and is studied in the field of Diophantine approximations. It is traditionally labeled by "?" but for readability purposes we label it throughout this paper by $\mathcal{M}$. If $[a_0 ; a_1,a_2,...,a_n]$ is the continued fraction representation of $x\in \mathbbm{Q}$ then 
$$\mathcal{M}(x)=a_0+\sum_{k=1}^n\frac{(-1)^{k+1}}{2^{a_1+...+a_k}}\text{ .}$$
If $x=[a_0 ; a_1,a_2...]$ is irrational then the summation becomes infinite. We briefly describe an equivalent construction which will be more useful for our needs. $\mathcal{M}$ is first defined as a function from $\mathbbm{Q}\cap[0,1]$ to the dyadic rationals $\mathbbm{Q}_2\cap[0,1]$ and then extended to all of $[0,1]$ using continuity arguments. For more details see \cite{MinkFunc}. If $q$ is the $(k+1)$-th term in $\mathscr{F}_n$, that is $\mathscr{F}_n = (q_1,q_2,...,q_k,q,q_{k+2},...,q_m)$ then $\mathcal{M}(q)\defeq \frac{k}{2^n}$. Notice that every $q\in\mathbbm{Q}$ appears in infinitely many $\mathscr{F}_n$ but the definition does not depend on the choice of $n$. The next lemma, whose proof is immidate from the preceding discussion, will be useful in the next section.
\begin{lemma}
\label{FareyPair}
Let $p<q$ be a Farey pair in $\mathscr{F}_n$. Then $\mathcal{M}(q)-\mathcal{M}(p)=\frac{1}{2^n}$.
\end{lemma}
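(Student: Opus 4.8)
The plan is to read both values of $\mathcal{M}$ directly off the positional definition given in Section \ref{MinkSec} and subtract. First I would unpack what it means for $p<q$ to be a Farey pair: by definition there is an index $i$ with $p=s^n_i$ and $q=s^n_{i+1}$, so in the ordered sequence $\mathscr{F}_n$ the term $p$ occupies position $i$ and $q$ occupies position $i+1$. Next I would apply the defining formula, namely that the $(k+1)$-th term $t$ of $\mathscr{F}_n$ satisfies $\mathcal{M}(t)=k/2^n$. Matching $k+1=i$ gives $\mathcal{M}(p)=(i-1)/2^n$, and matching $k+1=i+1$ gives $\mathcal{M}(q)=i/2^n$. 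Subtracting the two expressions immediately yields $\mathcal{M}(q)-\mathcal{M}(p)=1/2^n$, which is exactly the assertion.

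The single point that deserves a moment's attention is the well-definedness of $\mathcal{M}$: it is prescribed through positions in $\mathscr{F}_n$, yet every rational occurs in infinitely many Farey sequences, so a priori one must check that the assigned value is independent of which $n$ is chosen. In the present statement, however, the hypothesis already fixes the particular sequence $\mathscr{F}_n$ in which $p$ and $q$ are consecutive, and I would evaluate $\mathcal{M}$ for both terms using this same $n$. Since both values are then read from one and the same sequence, the independence-of-$n$ property is not even needed for the argument.

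Consequently I do not expect any genuine obstacle: the claim is an immediate arithmetic consequence of the positional definition, precisely as the text indicates. The entire content is the observation that successive terms differ by exactly one in their position index, combined with the fact that this index enters $\mathcal{M}$ linearly with increment $1/2^n$ per step.
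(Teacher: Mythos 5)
Your proof is correct and is exactly the argument the paper has in mind: the paper declares the lemma ``immediate from the preceding discussion,'' i.e.\ from the positional definition $\mathcal{M}(s^n_{k+1})=k/2^n$, and your computation $\mathcal{M}(q)-\mathcal{M}(p)=i/2^n-(i-1)/2^n=1/2^n$ is precisely that. Your side remark that independence of $n$ is not needed here (since both values are read from the same $\mathscr{F}_n$) is also accurate.
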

$\mathcal{M}$ is an ascending continuous bounded function defined on $\mathbbm{Q}$. It can be used to construct a measure on $[0,1]$ by defining $\mu_\mathcal{M}([a,b))=\mathcal{M}(b)-\mathcal{M}(a)$.
Our purposes demand extending $\mu_\mathcal{M}$ to the entire real line. The following extension will turn out to be useful. For $q\in[n,n+1]$ define
$$\bar{\mathcal{M}}(q)=\frac{1}{3}(\sum_{k=-\infty}^{n-1}\frac{1}{2^{|k|}}+\frac{\mathcal{M}(\{q\})}{2^{|n|}})\text{ ,}$$
where $\{q\}$ denotes the fractional part of $q$.   Using the same methods used for $\mathcal{M}$ we construct the extended Minkowski measure $\mu_{\bar{\mathcal{M}}}$. Notice that $\lim_{q\to-\infty}\bar{\mathcal{M}}(q)=0$ and $\lim_{q\to\infty}\bar{\mathcal{M}}(q)=1$ thus $\mu_{\bar{\mathcal{M}}}$ is a probability measure.\\\\
Notice that the tight connection between the Farey tessellation and continued fractions is not new. Series has shown in \cite{Series} that the continued fraction expansion of any $x\in\mathbbm{R}$ can be read from its position relative to the Farey tessellation. It follows from her work that if $T(p,q,s)$ is a tile with $p,q,s<\infty$ and $q=[a_0;a_1,...,a_n]$ then $p\oplus q,q\oplus s\in \{[a_0;a_1,...,a_n+1],[a_0;a_1,...,a_n-1,2]\}$. Therefore, for any tile $T(p,q,s)$ with finite vertices, if $p = [a_0;a_1,...,a_n]$ and $p\oplus q=[b_0;b_1,...,b_m]$ then $\sum_{i=0}^mb_i=1+\sum_{i=0}^na_i$. This suggests an alternative approach for proving some claims presented here.
\subsection{Main results}
\subsubsection{Word metric}
The projective line $X$ can be identified with $\partial\mathbbm{H}=\mathbbm{R}\cup \{\infty\}$ using $[\icol{x\\y}]\xrightarrow{}\frac{x}{y}$ when $y\neq 0$ and $[\icol{1\\0}]\xrightarrow{}\infty$. $G$ acts on $\partial \mathbbm{H}$ with Mobius transformations. By choosing suitable representatives we see that $\partial\mathbbm{H}$ and $X$ are in fact isomorphic $G$-sets:
$$g[\icol{x\\y}] = g[\icol{\frac{x}{y}\\1}]=g[\icol{z\\1}]=[\icol{az+b\\cz+d}]=[\icol{\frac{az+b}{cz+d}\\1}]\pd$$ $\partial \mathbbm{H}$ will be more convenient to work with for our needs. We first prove Theorem \ref{WordLimit} for $z\in\{0,1,\infty\}$.
\begin{lemma}
For every $f\in C(X)$ and for $z\in\{0,1,\infty\}\subset X$,
$$\lim_{n\to\infty}\frac{S_n(f,z)}{|\Gamma_n|}=\int_Xfd\mu_{\bar{\mathcal{M}}}\pd$$
\end{lemma}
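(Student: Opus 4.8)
The plan is to forget the particular vertex $z$ and work instead with the \emph{shadow} of each tile, i.e.\ the smallest closed subinterval of $\partial\mathbbm{H}=\mathbbm{R}\cup\{\infty\}$ containing the finite vertices of $\Delta_\gamma$. Since $z\in\{0,1,\infty\}$ is a vertex of $\Delta_e$ and $\gamma$ carries the vertices of $\Delta_e$ to those of $\Delta_\gamma$, the point $\gamma z$ is always a vertex of $\Delta_\gamma$; in particular, whenever $\Delta_\gamma$ has three finite vertices, $\gamma z$ lies in its shadow $I_\gamma$ regardless of which of $0,1,\infty$ we chose. Thus it suffices to prove the statement for the empirical measures $\nu_n=\tfrac{1}{|\Gamma_n|}\sum_{\gamma\in\Gamma_n}\delta_{\gamma z}$ by showing they are asymptotically a Riemann sum for $\mu_{\bar{\mathcal{M}}}$ along the partition $\{I_\gamma\}$; the same argument then settles all three base points simultaneously.

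The combinatorial heart is the following claim, which I would prove by induction on $n$: the shadows $\{I_\gamma:\gamma\in\Gamma_n\}$ have pairwise disjoint interiors and cover $[-n,n+1]$; moreover every $\gamma$ whose tile is finite satisfies $\mu_{\bar{\mathcal{M}}}(I_\gamma)=\tfrac{1}{|\Gamma_n|}=\tfrac{1}{3\cdot 2^{n-1}}$, while the two ``special'' elements with $\Delta_\gamma=T(\pm n,\cdot\,,\infty)$ (shadows $[n,n+1]$ and $[-n,1-n]$) satisfy $\mu_{\bar{\mathcal{M}}}(I_\gamma)=\tfrac{1}{3\cdot 2^{n}}$. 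Passing from $\Gamma_n$ to $\Gamma_{n+1}$ amounts to replacing each tile by its two ``forward'' neighbours, which are in bijection with $\Gamma_{n+1}$ via appending the last letter. Lemma \ref{NgbTiles}(3) shows that a finite tile $T(q_1,q_2,q_3)$ is split by its two children into the subintervals $[q_1,q_2]$ and $[q_2,q_3]$; since $q_2=q_1\oplus q_3$ is the Farey mediant and any finite tile lies inside a single strip $[m,m+1]$ (Lemma \ref{WordFareyLength2}), the defining formula for $\bar{\mathcal{M}}$ together with Lemma \ref{FareyPair} shows the mediant bisects the $\mu_{\bar{\mathcal{M}}}$--mass of $I_\gamma$, so each child receives mass $\tfrac{1}{3\cdot 2^{n}}$. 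Lemma \ref{NgbTiles}(1)--(2) handles a special tile $T(m,m+1,\infty)$: its children are a finite tile with the same shadow $[m,m+1]$ and the next special tile $T(m+1,m+2,\infty)$, which accounts for both the finite mass $\tfrac{1}{3\cdot 2^{n}}$ and the halved special mass $\tfrac{1}{3\cdot 2^{n+1}}$. Disjointness and covering are inherited because children subdivide their parents' shadows, and the count closes up since $|\Gamma_{n+1}|=2|\Gamma_n|$ by Lemma \ref{CombiCount}. (Lemma \ref{TilesContain} is the clean statement of this nesting.)

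It remains to control the mesh. Equip $X$ with the spherical metric, in which it is compact and $f$ is uniformly continuous. Inside a fixed strip the shadows are Farey intervals of level $n-|m|-1\to\infty$, hence Euclidean--small; for $|m|$ large the whole strip $[m,m+1]$ is spherically small because it sits near $\infty$. A short compactness argument combining these two regimes shows $\max_\gamma\operatorname{diam}_S(I_\gamma)\to 0$. Now split $\int f\,d\nu_n-\int f\,d\mu_{\bar{\mathcal{M}}}$ according to finite versus special elements. For a finite $\gamma$ we have $\tfrac{1}{|\Gamma_n|}f(\gamma z)=\int_{I_\gamma}f(\gamma z)\,d\mu_{\bar{\mathcal{M}}}$ because $\mu_{\bar{\mathcal{M}}}(I_\gamma)=1/|\Gamma_n|$, so, using $\gamma z\in I_\gamma$, the contribution of the finite elements differs from $\int_{[-(n-1),n]}f\,d\mu_{\bar{\mathcal{M}}}$ by at most $\omega_f(\operatorname{diam}_S\text{-mesh})$, where $\omega_f$ is the modulus of continuity of $f$. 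The two special terms are bounded by $2\|f\|_\infty/|\Gamma_n|$, and the remaining mass $\mu_{\bar{\mathcal{M}}}(X\setminus[-(n-1),n])=\tfrac{1}{3\cdot 2^{n-2}}$ controls the rest of the integral; all of these tend to $0$. Letting $n\to\infty$ gives the claim.

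The genuinely content-bearing step, and the main obstacle, is the inductive measure computation: verifying that the Farey mediant of a finite tile \emph{exactly} bisects its extended Minkowski mass, which forces the careful bookkeeping of finite versus special tiles and the single--strip property, and then arranging the two regimes in the mesh estimate so that one uniform--continuity bound applies both near $\infty$ and in the densely subdivided central strips.
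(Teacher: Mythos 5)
Your proof is correct, and it reorganizes the argument in a way that differs from the paper's. The paper fixes a Farey interval $[p,q]$ (vertices of a tile $\Delta_g$ with $\|g\|=N$), counts $|\{\gamma\in\Gamma_n:\gamma z\in[p,q]\}|=2^{n-N}+O(1)$ via Lemma \ref{TilesContain}, identifies the normalized limit with $\bar{\mathcal{M}}(q)-\bar{\mathcal{M}}(p)$ through Lemmas \ref{WordFareyLength} and \ref{FareyPair}, and then passes to continuous $f$ by uniform approximation with step functions supported on (translated) Farey intervals. You instead fix $n$, partition $\partial\mathbbm{H}$ by the level-$n$ shadows, and prove the dual fact that every finite level-$n$ shadow has extended Minkowski mass exactly $1/|\Gamma_n|=\tfrac{1}{3\cdot 2^{n-1}}$ --- the cancellation $\tfrac{1}{3}\,2^{-|m|}\cdot 2^{-(n-|m|-1)}=\tfrac{1}{3}\,2^{1-n}$ across strips, which the paper uses implicitly but never isolates. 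This turns $\nu_n$ into an equal-mass Riemann sum with sample points $\gamma z\in I_\gamma$, so the continuous case follows from a single modulus-of-continuity bound rather than from a step-function approximation. The combinatorial inputs are the same (Lemmas \ref{NgbTiles}, \ref{TilesContain}, \ref{WordFareyLength2}, \ref{FareyPair}, \ref{CombiCount}), and your inductive bookkeeping of finite versus special tiles, the masses $\tfrac{1}{3\cdot 2^{n-1}}$ and $\tfrac{1}{3\cdot 2^{n}}$, and the leftover mass $\tfrac{1}{3\cdot 2^{n-2}}$ outside $[-(n-1),n]$ all check out. What your route buys is a cleaner uniform statement and a one-shot continuity argument valid for all three base points at once; what it costs is the spherical mesh estimate, where you correctly have to split into the two regimes (finitely many central strips, where the Farey mesh $\sim 1/(n-|m|)$ shrinks, versus far strips that are already spherically small) --- the paper avoids this entirely by never needing a uniform mesh, at the price of handling each test interval and each integer translate separately.
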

\begin{proof}
Let $f=\mathbbm{1}_{[p,q]}$ with $p,q\in\mathbbm{Q}\cap[0,1]$ a Farey pair. The summation $S_n(f,z)$ can be expressed as $S_n(f,z)=|\{\gamma\in\Gamma_n:\gamma z\in[p,q]\}|$. Since $p,q$ are a Farey pair they are vertices of some triangle $T(p,s,q)\in\mathscr{T}$. Let $g\in\Gamma$ such that $T(p,s,q)=\Delta_g$ and let $N\defeq||g||$. Let $\gamma\in\Gamma$ with $\Delta_\gamma = T(u,v,w)$. Since elements of $\Gamma$ move vertices of tiles to vertices of tiles, and since $\{0,1,\infty\}$ are the vertices of $\Delta_e$, $\gamma z$ is a vertex of $\Delta_\gamma$. Therefore
$\gamma z\in [p,q]$ implies either $[u,w]\subset [p,q]$ or $w=p$ or $u=q$. The typical case is $[u,w] \subset [p,q]$ and for every $n$ exist at most 2 different $\gamma$ such that the other cases occur. Using Lemma \ref{TilesContain} and a simple combinatorial argument we deduce that for every $n>N$, $S_n(f,z)=2^{n-N}+\theta(n)$
with $\theta(n)\in\{0,1,2\}$. Using Lemma \ref{CombiCount} we get
$$\lim_{n\to\infty}\frac{S_n(f,z)}{|\Gamma_n|}=\lim_{n\to\infty}\frac{2^{n-N}+\theta(n)}{3\cdot 2^{n-1}}=\frac{1}{3}2^{1-N}\pd$$
Since $||g||=N$ and using Lemma \ref{WordFareyLength} we see that $p,q$ are Farey pair in $\mathscr{F}_{N-1}$. Lemma \ref{FareyPair} implies $\mathcal{M}(q)-\mathcal{M}(p)=2^{1-N}$ and therefore
$$\lim_{n\to\infty}\frac{S_n(f,z)}{|\Gamma_n|}=\frac{1}{3}(\mathcal{M}(q)-\mathcal{M}(p))=\bar{\mathcal{M}}(q)-\bar{\mathcal{M}}(p)\pd$$
If $p,q\in\mathbbm{Q}\cap[m,m+1],m\in\mathbbm{Z}$ we use similar arguments. This time to compute $S_n(f,z)=|\{\gamma\in\Gamma_n:\gamma z\in[p,q]\}|$ notice that $\gamma z\in[p,q]$ implies $\gamma z\in[m,m+1]$, therefore $\Delta_\gamma=T(u,v,w)$ has $[u,w]\subset[m,m+1]$ or $w=m$ or $u=m+1$. If the reduced representation of $\gamma$ is $\gamma_1...\gamma_t$ and $[u,w]\subset[m,m+1]$ it must have $\Delta_{\gamma_1...\gamma_{|m|}}=T(m,m+1,\infty)$. Using same counting method as before we see
$$\lim_{n\to\infty}\frac{S_n(f,z)}{|\Gamma_n|}=\frac{1}{3}\frac{1}{2^{|m|}}(\mathcal{M}(\{q\})-\mathcal{M}(\{p\}))=\bar{\mathcal{M}}(q)-\bar{\mathcal{M}}(p)\pd$$ 
Now let $f\in C(X)$. Since $X$ is compact we can find a sequence of simple functions $f_m$ which converge uniformly to $f$. That is, there exists a sequence of simple functions $f_m=\sum c_i\mathbbm{1}_i$ with $\mathbbm{1}_i=\mathbbm{1}_{[p_i,q_i]}$ indicator functions such that $\epsilon(m)\defeq \sup_{x\in X}|f_m(x)-f(x)|$ has $\lim_{m\to\infty}\epsilon(m)=0$. Lemma \ref{FareySeq} implies density of Farey pairs in $[0,1]$ so we can take $p_i,q_i$ to be integral translations of Farey pairs and get
$$\lim_{n\to\infty}\abs{\frac{S_n(f,z)}{|\Gamma_n|}-\int_Xfd\mu_{\bar{\mathcal{M}}}}=$$
$$\lim_{n\to\infty}\abs{\frac{S_n(f,z)}{|\Gamma_n|}-\frac{S_n(f_m,z)}{|\Gamma_n|}+\frac{S_n(f_m,z)}{|\Gamma_n|}-\int_Xf_md\mu_{\bar{\mathcal{M}}}+\int_Xf_md\mu_{\bar{\mathcal{M}}}-\int_Xfd\mu_{\bar{\mathcal{M}}}}\le$$
$$\lim_{n\to\infty}\Big(\abs{\frac{S_n(f-f_m,z)}{|\Gamma_n|}}+\abs{\frac{S_n(f_m,z)}{|\Gamma_n|}-\int_Xf_md\mu_{\bar{\mathcal{M}}}}+\abs{\int_X(f-f_m)d\mu_{\bar{\mathcal{M}}}}\Big)\le$$
$$\lim_{n\to\infty}\Big(\epsilon(m)+\abs{\frac{S_n(f_m,z)}{|\Gamma_n|}-\int_Xf_md\mu_{\bar{\mathcal{M}}}}+\epsilon(m)\Big)=2\epsilon(m)\pd$$
We may now take $m\to\infty$ and get
$$\lim_{n\to\infty}\abs{\frac{S_n(f,z)}{|\Gamma_n|}-\int_Xfd\mu_{\bar{\mathcal{M}}}}=\lim_{m\to\infty}\lim_{n\to\infty}\abs{\frac{S_n(f,z)}{|\Gamma_n|}-\int_Xfd\mu_{\bar{\mathcal{M}}}}\le$$
$$\lim_{m\to\infty}2\epsilon(m)=0\pd$$
\end{proof}
We are now ready to prove the main theorem.
\begin{proof}[Proof of theorem \ref{WordLimit}]
Let $f\in C(X)$ and $z\in X = \mathbbm{R}\cup\{\infty\}$. We show
$$\lim_{n\to\infty}\frac{1}{|\Gamma_n|}\sum_{\gamma\in\Gamma_n}|f(\gamma z)-f(\gamma 0)|=0\pd$$
We first divide $\Gamma_n$ into four disjoint sets. For $\delta,R,L>0$ define
$$\Gamma_n^\delta=\{[\icol{a \ \ b\\c\ \ d}]\in\Gamma_n:|z+\frac{d}{c}|<\delta\}\text{ ,}$$
$$\Gamma_n^{\delta,R,L+}=\{[\icol{a \ \ b\\c\ \ d}]\in\Gamma_n:R>|z+\frac{d}{c}|\ge\delta,|c|<L\}\text{ ,}$$
$$\Gamma_n^{\delta,R,L-}=\{[\icol{a \ \ b\\c\ \ d}]\in\Gamma_n:R>|z+\frac{d}{c}|\ge\delta,|c|\ge L\}\text{ ,}$$
$$\Gamma_n^R=[\{\icol{a \ \ b\\c\ \ d}]\in\Gamma_n:|z+\frac{d}{c}|\ge R\}\text{ ,}$$
and denote $\Gamma_n^{\delta,R,L\pm}\defeq\Gamma_n^{\delta,R,L+}\cup \Gamma_n^{\delta,R,L-}$. These sets are well defined as both $\frac{d}{c}$ and $|c|$ are the same for $\pm\icol{a \ \ b\\c\ \ d}$. Notice that $\gamma = [\icol{a \ \ b\\c\ \ d}]$ has $\gamma\frac{-d}{c}=\infty$ hence $\gamma^{-1}\infty = \frac{-d}{c}$.  Using Lemma \ref{WordLimit} with $z=\infty$ and the fact that $||\gamma||=||\gamma^{-1}||$ we get $$\lim_{n\to\infty}\frac{|\Gamma_n^\delta|}{|\Gamma_n|}=\lim_{n\to\infty}\frac{|\{\gamma\in\Gamma_n:|z-\gamma^{-1}\infty|<\delta\}|}{|\Gamma_n|}=$$
$$\lim_{n\to\infty}\frac{|\gamma\in\Gamma_n:\gamma^{-1}\infty\in[z-\delta,z+\delta]|}{|\Gamma_n|}=\lim_{n\to\infty}\frac{S_n(\mathbbm{1}_{[z-\delta,z+\delta]},\infty)}{|\Gamma_n|}=$$
$$\mu_{\bar{\mathcal{M}}}([z-\delta,z+\delta])$$ We apply same reasoning for $\Gamma_n^\delta$ and get $\lim_{n\to\infty}\frac{|\Gamma_n^R|}{|\Gamma_n|}=\mu_{\bar{\mathcal{M}}}([z+R,\infty)\cup[z-R,-\infty))$. $f$ is continuous on compact space and therefore bounded by some $B\in\mathbbm{R}$ therefore
$$\frac{1}{|\Gamma_n|}\sum_{\gamma\in\Gamma_n^\delta}|f(\gamma z)-f(\gamma 0)|+\frac{1}{|\Gamma_n|}\sum_{\gamma\in\Gamma_n^R}|f(\gamma z)-f(\gamma 0)|\le$$
$$B(\mu_{\bar{\mathcal{M}}}([z-\delta,z+\delta])+\mu_{\bar{\mathcal{M}}}([z+R,\infty)\cup[z-R,-\infty)))\xrightarrow[\delta\to0]{}0\pd$$
The convergence being due to continuity of $\bar{\mathcal{M}}$. To bound the sum over $\Gamma_n^{\delta,R,L\pm}$ we approximate $|\gamma z - \gamma 0|$. Let $\gamma = [\icol{a \ \ b\\c\ \ d}]\in\Gamma_n^{\delta,R,L\pm}$ and assume $d\neq 0$. If $d=0$ then $\gamma0=\infty$ and $\Delta_\gamma=T(m,m+1,\infty)$ with some $m\in\mathbbm{Z}$. There are at most 2 such $\gamma$ in $\Gamma_n$. Notice that $\gamma\in\Gamma_n^{\delta,R,L\pm}$ implies $cz+d\neq 0$ so we can write
$$|\gamma z - \gamma 0|=|\frac{az+b}{cz+d}-\frac{b}{d}|=|\frac{z}{dc(z+\frac{d}{c})}|\le \frac{|z|}{\delta |c|}\pd$$
 Every $\gamma\in\Gamma_n^{\delta,R,L-}$ has $\gamma^{-1}=-\frac{d}{c}\in[z-R,z+R]$. Lemma \ref{WordFareyLength2} implies that for each $n$ there are at most 2 different $\gamma\in\Gamma_n$ that can have $\gamma_1\infty = \gamma_2\infty$. Since $\gamma^{-1}\infty\in \mathbbm{Q}$ and $\abs{c}<L$ we can bound
$$|\Gamma_n^{\delta,R,L-}|\le 2|\gamma^{-1}\infty:\gamma\in\Gamma_n^{\delta,R,L-}|\le 2|\{\frac{p}{q}\in\mathbbm{Q}\cap[z-R,z+R]:q<L\}|\le$$
$$2(2R+1)(L+(L-1)+...+1)< 2(2R+1)L^2\pd$$
If $\gamma\in\Gamma_n^{\delta,R,L+}$ it has $c\ge L$ hence $|\gamma z-\gamma 0|\le \frac{|z|}{\delta L}$ and uniform continuity of $f$ then implies $|f(\gamma z) - f(\gamma 0)|<\epsilon(L)\xrightarrow[L\to \infty]{}0$.
Putting everything together we get:
$$\lim_{n\to\infty}|\frac{S_n(f,z)}{|\Gamma_n|}-\frac{S_n(f,0)}{|\Gamma_n|}|\le\lim_{n\to\infty}\frac{1}{|\Gamma_n|}\sum_{\gamma\in\Gamma_n}|f(\gamma z)-f(\gamma0)|=$$
$$\lim_{n\to\infty}\frac{1}{|\Gamma_n|}(\sum_{\gamma\in\Gamma_n^\delta}|f(\gamma z)-f(\gamma0)|+\sum_{\gamma\in\Gamma_n^{\delta,R,L\pm}}|f(\gamma z)-f(\gamma0)|+\sum_{\gamma\in\Gamma_n^R}|f(\gamma z)-f(\gamma0)|)\le$$
$$B\mu_{\bar{\mathcal{M}}}([z-\delta,z+\delta]+B\mu_{\bar{\mathcal{M}}}([z+R,\infty)\cup[z-R,-\infty))+\epsilon(L)\pd$$
We can now take $L,R\to\infty$ and $\delta\to 0$ and get
$$\lim_{n\to\infty}|\frac{S_n(f,z)}{|\Gamma_n|}-\frac{S_n(f,0)}{|\Gamma_n|}|=\lim_{\delta\to 0}\lim_{R\to \infty}\lim_{L\to \infty}\lim_{n\to\infty}|\frac{S_n(f,z)}{|\Gamma_n|}-\frac{S_n(f,0)}{|\Gamma_n|}|\le$$
$$\lim_{\delta\to 0}\lim_{R\to \infty}\lim_{L\to \infty}(B\mu_{\bar{\mathcal{M}}}([z-\delta,z+\delta]+B\mu_{\bar{\mathcal{M}}}([z+R,\infty)\cup[z-R,-\infty))+\epsilon(L))=0\pd$$
\end{proof}
\subsubsection{Stationary measure and random walk average}
We now prove Theorem \ref{MinkStationary}, stating that the extended Minkowsi probability measure is in fact stationary with respect to the random walk defined by $\mu(\{a\})=\mu(\{b\})=\mu(\{c\})=\frac{1}{3}$. Notice that by Furstenberg's uniqueness Theorem \cite{Benoist} the stationary measure in this case is unique.
\begin{proof}
Let $f$ be a continuous function over $X$. Then
$$\mu * \mu_{\bar{\mathcal{M}}}(f)=\frac{1}{3}(\mu_{\bar{\mathcal{M}}}(f\circ a)+\mu_{\bar{\mathcal{M}}}(f\circ b)+\mu_{\bar{\mathcal{M}}}(f\circ c))=$$
$$\frac{1}{3}\lim_{n\to\infty}\frac{1}{|\Gamma_n|}(S_n(f\circ a,0)+S_n(f\circ b,0)+S_n(f\circ c,0))=$$
$$\frac{1}{3}\lim_{n\to\infty}\frac{1}{|\Gamma_n|}\sum_{\gamma\in\Gamma_n}(f(a\gamma0)+f(b\gamma0)+f(c\gamma0))\pd$$
Lemma \ref{ReducedRep} implies that this is equal to
$$\frac{1}{3}\lim_{n\to\infty}\frac{1}{|\Gamma_n|}(S_{n+1}(f,0)+2S_{n-1}(f,0))=$$
$$\lim_{n\to\infty}\frac{1}{3}(2\frac{S_{n+1}(f,0)}{|\Gamma_{n+1}|}+\frac{S_{n-1}(f,0)}{|\Gamma_{n-1}|})=\frac{1}{3}(2\mu_{\bar{\mathcal{M}}}(f)+\mu_{\bar{\mathcal{M}}}(f))=\mu_{\bar{\mathcal{M}}}(f)\pd$$
By Riesz representation theorem measures are determined by their values over continuous functions, hence we are done.
\end{proof}
The fact that the stationary measure and the word metric limit coincide is somewhat surprising. The following theorem generalizes the conditions under this occurs.
\begin{definition}
Let $\mu$ be a probability measure on $G$ with $supp(\mu)=S\subset G$ such that $S$ generates $G$. We denote $G_n=\{g\in G: ||g|||_S =n\}$.
\begin{enumerate}
    \item $\mu$ is called evenly distributed if the mass that $\mu^{*n}$ assigns to an element depends only on its word metric. That is, for any $n,m\in\mathbbm{N}$ exists $1\ge\mu_{n,m}\ge 0$ such that any $g\in G$ with $||g||_S=m$ has $\mu^{*n}(g)=\mu_{n,m}$.
    \item $G$ action on a compact space $X$ is said to converge in word metric with respect to $S$ if for any $f\in C(X)$ and for any $x\in X$ the following limit exists:
    $$\lim_{n\to\infty}\frac{1}{|G_n|}\sum_{g\in G_n}f(gx)\pd$$
\end{enumerate}
\end{definition}
Notice that using Riesz representation theorem we know that if $G$ converges in word metric then it converges to a space average with respect to some measure $\nu$.
\begin{theorem}
\label{WordStationary}
Let $G$ be a group acting continuously on compact space $X$. Let $\mu$ be a probability measure on $G$ such that $S\defeq supp(\mu)$ generates G. Assume $\mu$ is evenly distributed and that $G$ action on $X$ converges in word metric with respect to $S$ to a space average with respect to $\nu$. Then $\nu$ is $\mu$ stationary.
\end{theorem}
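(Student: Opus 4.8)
The goal is to verify the single identity $\mu * \nu = \nu$, since by the Riesz representation theorem a measure is determined by its action on $C(X)$. The plan is to exhibit $\nu$ as a genuine (not merely Ces\`aro) weak-$\ast$ limit of the pushforwards $\nu_n \defeq \mu^{*n} * \delta_x$ for a fixed $x \in X$, and then to pass to the limit in the recursion $\nu_{n+1} = \mu * \nu_n$.

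The first step is to rewrite the two hypotheses in a common language. Write $u_m \defeq \frac{1}{|G_m|}\sum_{g \in G_m}\delta_g$ for the uniform probability measure on the sphere $G_m$. Even distribution says that $\mu^{*n}(g)$ depends only on $\|g\|_S$, so the restriction of $\mu^{*n}$ to each sphere $G_m$ is a constant multiple of $u_m$; setting $w^{(n)}_m \defeq \mu^{*n}(G_m)$ this reads
$$\mu^{*n} = \sum_{m \ge 0} w^{(n)}_m u_m, \qquad w^{(n)}_m \ge 0, \quad \sum_{m \ge 0} w^{(n)}_m = 1.$$
On the other hand, convergence in word metric says precisely that the probability measures $\beta_m \defeq u_m * \delta_x$ on $X$ satisfy $\beta_m \to \nu$ weak-$\ast$, since $\beta_m(f) = \frac{1}{|G_m|}\sum_{g \in G_m} f(gx)$. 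Pushing the displayed identity forward by $\delta_x$ yields $\nu_n = \sum_{m \ge 0} w^{(n)}_m \beta_m$, so that $\nu_n$ is a convex combination of the $\beta_m$ with mixing weights $w^{(n)}_\bullet$.

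The decisive step, which I expect to be the main obstacle, is to show that the weight distributions escape to infinity: for every fixed $M$, $\sum_{m < M} w^{(n)}_m = \mu^{*n}(B_M) \to 0$ as $n \to \infty$, where $B_M = \{g : \|g\|_S < M\}$. Granting this, the convergence $\nu_n \to \nu$ follows by a standard splitting: given $f \in C(X)$ and $\varepsilon > 0$, choose $M$ with $|\beta_m(f) - \nu(f)| < \varepsilon$ for all $m \ge M$ (possible since $\beta_m \to \nu$), and then bound $|\nu_n(f) - \nu(f)| \le 2\|f\|_\infty\,\mu^{*n}(B_M) + \varepsilon$, which is below $2\varepsilon$ once $n$ is large. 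The escape statement itself reduces, since each ball $B_M$ is finite, to the pointwise decay $\mu^{*n}(g) \to 0$ for every fixed $g \in G$. This is where the infinitude of $G$ (implicit in $G_n \ne \emptyset$ for all $n$) enters: it is the standard dissipativity of a random walk whose step distribution generates an infinite group, and in the non-amenable situation of the Farey group it is simply transience of the walk. I would isolate this as a separate lemma and prove it from the decay of return probabilities on infinite groups. It is exactly this step, and not merely a Ces\`aro statement, that is needed, matching the distinction emphasized in the remark preceding the theorem.

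Finally, with $\nu_n \to \nu$ in hand I pass to the limit in $\nu_{n+1} = \mu * \nu_n$. The map $\pi \mapsto \mu * \pi$ is weak-$\ast$ continuous on probability measures: for $f \in C(X)$ one has $(\mu * \pi)(f) = \int_G \pi(f \circ g)\,d\mu(g)$, with integrand bounded by $\|f\|_\infty$ and $f \circ g \in C(X)$ by continuity of the action, so $\pi_n \to \pi$ forces $(\mu * \pi_n)(f) \to (\mu * \pi)(f)$ by dominated convergence. Hence $\nu = \lim_n \nu_{n+1} = \lim_n \mu * \nu_n = \mu * \nu$, which is the asserted stationarity of $\nu$.
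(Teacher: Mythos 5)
Your proposal is correct and follows essentially the same route as the paper: its Lemma \ref{Gen1} performs exactly your decomposition of $\mu^{*n}\ast\delta_x$ over the spheres $G_m$ using even distribution, and rests on the same escape-of-mass claim $\mu^{*n}(\bigcup_{i\le k}G_i)\to 0$ --- which the paper, like you, asserts rather than proves, so you are right to single it out as the real content (the general fact that $\mu^{*n}(g)\to 0$ pointwise for an adapted walk on an infinite group is what is needed; note that ``transience'' is stronger than necessary, since recurrent walks such as simple random walk on $\mathbbm{Z}$ also satisfy it). The only difference is in the final step, where the paper's Lemma \ref{Gen2} deduces stationarity from $\mu^{*n}\ast\pi\to\nu$ via a Ces\`aro/telescoping identity, whereas you pass to the limit directly in the recursion $\nu_{n+1}=\mu\ast\nu_n$ using weak-$\ast$ continuity of $\pi\mapsto\mu\ast\pi$; the two arguments are equivalent, yours being marginally more economical.
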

To prove this theorem we make use of two lemmas. Denote by $\delta_x$ the Dirac measure at point $x$.
\begin{lemma}
\label{Gen1}
In the settings of Theorem \ref{WordStationary}, for any $x\in X$
$$\mu^{\ast n}\ast\delta_x \xrightarrow{\text{weak}-\ast} \nu\pd$$
\end{lemma}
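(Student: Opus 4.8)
The plan is to test $\mu^{\ast n}\ast\delta_x$ against an arbitrary $f\in C(X)$ and to reduce the statement to a summability argument driven by the two hypotheses. Writing the convolution out, $(\mu^{\ast n}\ast\delta_x)(f)=\sum_{g\in G}\mu^{\ast n}(g)f(gx)$, I would group the group elements by word length and invoke the assumption that $\mu$ is evenly distributed to pull the common weight $\mu^{\ast n}(g)=\mu_{n,m}$ outside the inner sum over each sphere $G_m$. Setting $w_{n,m}\defeq\mu_{n,m}\abs{G_m}=\mu^{\ast n}(G_m)$ and $a_m\defeq\frac{1}{\abs{G_m}}\sum_{g\in G_m}f(gx)$, this yields $(\mu^{\ast n}\ast\delta_x)(f)=\sum_{m\ge0}w_{n,m}a_m$, where for each fixed $n$ the $w_{n,m}$ are nonnegative and sum to $1$ (because $\mu^{\ast n}$ is a probability measure), while $a_m\to\nu(f)$ as $m\to\infty$ by the hypothesis that the action converges in word metric to the space average against $\nu$.

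Thus the lemma becomes a Toeplitz-type statement: a sequence of probability weights $(w_{n,m})_m$ applied to a convergent sequence $(a_m)_m$ should return the limit $\nu(f)$. Since $f$ is bounded, say by $B$, the $a_m$ are uniformly bounded by $B$, and the standard averaging argument gives $\sum_m w_{n,m}a_m\to\nu(f)$ \emph{provided the weights escape to infinity}, that is, provided that for every fixed $M$ one has $\sum_{m\le M}w_{n,m}=\mu^{\ast n}(B_M)\to0$ as $n\to\infty$, where $B_M=\{g:\abs{g}\le M\}$ is the finite ball of radius $M$. Granting this, I split the sum at an $M$ beyond which $\abs{a_m-\nu(f)}$ is uniformly small: the tail contributes at most the chosen $\epsilon$, and the head $\sum_{m\le M}w_{n,m}\le 2B\,\mu^{\ast n}(B_M)$ vanishes in $n$, finishing the estimate.

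The main obstacle is precisely the escape condition $\mu^{\ast n}(B_M)\to0$, which is where the infinitude of $G$ must enter. My plan is to reduce it to the return probability at the identity. Even distribution gives $\mu^{\ast n}(g)=\mu^{\ast n}(g^{-1})$ (using $\abs{g}=\abs{g^{-1}}$, which holds for the symmetric generating set of the intended application), so $\mu^{\ast 2n}(e)=\sum_g\mu^{\ast n}(g)\mu^{\ast n}(g^{-1})=\sum_g\mu^{\ast n}(g)^2$, the squared $\ell^2(G)$-norm of $\mu^{\ast n}$. A Cauchy--Schwarz computation then dominates every value by the return value: $\mu^{\ast 2n}(g)=\sum_h\mu^{\ast n}(h)\mu^{\ast n}(g^{-1}h)\le\sum_g\mu^{\ast n}(g)^2=\mu^{\ast 2n}(e)$, whence $\mu^{\ast 2n}(B_M)\le\abs{B_M}\,\mu^{\ast 2n}(e)$, and odd times are handled the same way via $\mu^{\ast(2n+1)}(g)\le(\mu^{\ast 2n}(e)\,\mu^{\ast 2n+2}(e))^{1/2}$. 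It therefore suffices to prove $\mu^{\ast 2n}(e)\to0$.

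For this last and genuinely delicate point I would argue in $\ell^2(G)$. Convolution by $\mu$ is a contraction there (a convexity estimate, using $\sum_h\mu(h)=1$), so $c_n\defeq\mu^{\ast 2n}(e)=\sum_g\mu^{\ast n}(g)^2$ is non-increasing and converges to some $\ell\ge0$; the hard part is ruling out $\ell>0$. If $\ell>0$, the bounded sequence $\mu^{\ast 2n}$ has a weak-$\ast$ limit point $\phi\in\ell^2(G)$, and weak convergence forces pointwise convergence, so $\phi(e)=\lim\mu^{\ast 2n}(e)=\ell>0$ and $\phi\neq0$. Since the norm deficits $c_n-c_{n+1}$ telescope and hence tend to $0$, the contraction pushes $\mu^{\ast n}$ into the subspace on which convolution preserves the $\ell^2$-norm, and $\phi$ lies there; equality in the convexity estimate forces $\phi(h^{-1}g)$ to be independent of $h\in S$ for every $g$, and because $S$ generates $G$ this makes $\phi$ constant on $G$. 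A nonzero constant cannot belong to $\ell^2$ of the infinite group $G$, a contradiction, so $\ell=0$. This vanishing of the return probability — i.e. that the $n$-step walk leaves every finite ball — is exactly the feature that makes the conclusion stronger than the easy Ces\`aro statement recorded after Theorem \ref{MinkConv}, and it is the step I expect to require the most care.
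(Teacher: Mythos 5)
Your main argument coincides with the paper's: group the sum $\mu^{\ast n}\ast\delta_x(f)=\sum_g\mu^{\ast n}(g)f(gx)$ by word-length spheres, use even distribution to rewrite it as $\sum_m w_{n,m}a_m$ with $\sum_m w_{n,m}=1$ and $a_m\to\nu(f)$, and conclude by the Toeplitz-type splitting at a cutoff. Where you go beyond the paper is the escape condition: the paper simply asserts that for any $k$, $\lim_{n\to\infty}\mu^{\ast n}(\bigcup_{i\le k}G_i)=0$, with no justification, whereas you correctly identify this as the crux and reduce it to the decay of the return probability $\mu^{\ast 2n}(e)=\sum_g\mu^{\ast n}(g)^2$, ruling out a positive limit by an equality-case argument in $\ell^2(G)$. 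Two caveats on that added step. First, it relies on $||g||=||g^{-1}||$ so that even distribution makes $\mu^{\ast n}$ symmetric; this is not among the hypotheses of Theorem \ref{WordStationary}, which only assumes $S=supp(\mu)$ generates $G$. It does hold in the intended application (the generators $a,b,c$ are involutions), but in the stated generality you should either add symmetry of $S$ as a hypothesis or concede that the escape condition is an assumption, as the paper implicitly treats it. Second, the deduction that $\phi(h^{-1}g)$ being independent of $h\in S$ makes $\phi$ constant on all of $G$ is slightly too quick: it yields invariance of $\phi$ under the subgroup generated by $S^{-1}S$, which for symmetric $S$ is the index-at-most-two subgroup of even-length words; that subgroup is still infinite when $G$ is, so the contradiction with $\phi\in\ell^2(G)\setminus\{0\}$ survives, but the statement should be phrased that way. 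With those repairs your proof is complete and in fact strictly more detailed than the one in the paper.
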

\begin{proof}
Let $f\in C(X)$. Since $\mu$ is evenly distributed we can write
$$\mu^{*n}\ast  \delta_x(f)=\int_G f(\gamma x)d\mu^{*n}(\gamma)=\sum_{m=1}^n\sum_{\gamma \in \Gamma_m}\mu^{*n}(\gamma)f(\gamma x)=\sum_{m=1}^n\mu_{n,m}S_m(f,x)\pd$$
$G$ action on $X$ converges in word metric, hence we approximate $S_m(f,x)=|G_m|(\nu(f)+\epsilon(m))$ with $\epsilon(m)\xrightarrow[m\to\infty]{}0$. $\mu^{*n}$ is a probability measure hence $\sum_{m=1}^n\mu_{n,m}|G_m|=1$ and therefore
$$\sum_{m=1}^n\mu_{n,m}|G_m|(\nu(f)+\epsilon(m))=\nu(f)+\sum_{m=1}^n\mu_{n,m}|G_m|\epsilon(m)\pd$$
Let $M=\sup(\{\epsilon(m):m\in\mathbbm{N}\})$ and $k\in\mathbbm{N}$. The second term can be bounded by
$$\sum_{m=1}^n \mu_{n,m}|G_m|\epsilon(m)\le\mu^{*n}(\bigcup_{i\le k}G_i)M+\max(\{\epsilon(m):m> k\})\pd$$
Since for any $k$, $\lim_{n\to\infty}\mu^{*n}(\bigcup_{i\le k}G_i)=0$ we get
$$\lim_{n\to\infty}\mu^{*n}\ast  \delta_x(f)\le \nu(f)+\max(\{\epsilon(m): m> k\})\pd$$
$k$ is arbitrary hence we are done.
\end{proof}
\begin{lemma}
\label{Gen2}
Let $G$ be a group acting continuously on $X$. Let $\pi$ be a probability measure on $X$ and $\mu$ a probability measure on $G$. Assuming
$$\mu^{\ast n}\ast \pi\xrightarrow{\text{weak}-\ast}\nu$$
implies that $\nu$ is $\mu$ stationary.
\end{lemma}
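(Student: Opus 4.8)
The plan is to exploit a \emph{shift-by-one} trick. Since $\mu^{\ast n}\ast\pi$ converges weak-$\ast$ to $\nu$, so does the tail sequence $\mu^{\ast(n+1)}\ast\pi$; and by associativity of convolution this tail equals $\mu\ast(\mu^{\ast n}\ast\pi)$. If I can show that the operation $\rho\mapsto\mu\ast\rho$ is continuous in the weak-$\ast$ topology, then letting $n\to\infty$ on both sides gives $\mu\ast\nu=\nu$, which is precisely the assertion that $\nu$ is $\mu$ stationary.

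First I would fix the definitions. For a probability measure $\rho$ on $X$ and $f\in C(X)$,
$$(\mu\ast\rho)(f)=\int_G\int_X f(gx)\,d\rho(x)\,d\mu(g)=\int_G\rho(f\circ g)\,d\mu(g),$$
where $f\circ g$ denotes the map $x\mapsto f(gx)$, which lies in $C(X)$ because the action is continuous, so the inner integral and (via Fubini) the measurability of $g\mapsto\rho(f\circ g)$ are justified. By the Riesz representation theorem, the conclusion $\mu\ast\nu=\nu$ is equivalent to $(\mu\ast\nu)(f)=\nu(f)$ for every $f\in C(X)$.

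Next I would establish the continuity claim. Fix $f\in C(X)$ and suppose $\rho_k\to\nu$ weak-$\ast$. For each fixed $g\in G$, weak-$\ast$ convergence applied to the test function $f\circ g$ gives $\rho_k(f\circ g)\to\nu(f\circ g)$. The integrands are bounded uniformly by $\|f\|_\infty$, and $\mu$ is a probability measure, so the dominated convergence theorem permits passing the limit through the integral over $G$:
$$\lim_{k\to\infty}(\mu\ast\rho_k)(f)=\lim_{k\to\infty}\int_G\rho_k(f\circ g)\,d\mu(g)=\int_G\nu(f\circ g)\,d\mu(g)=(\mu\ast\nu)(f).$$
Taking $\rho_k=\mu^{\ast k}\ast\pi$, which converges to $\nu$ by hypothesis, yields $\mu\ast(\mu^{\ast k}\ast\pi)\xrightarrow{\text{weak-}\ast}\mu\ast\nu$. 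Since $\mu\ast(\mu^{\ast k}\ast\pi)=\mu^{\ast(k+1)}\ast\pi\xrightarrow{\text{weak-}\ast}\nu$ as a tail of the convergent sequence, uniqueness of weak-$\ast$ limits forces $\mu\ast\nu=\nu$.

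The only step requiring genuine care is the weak-$\ast$ continuity of $\rho\mapsto\mu\ast\rho$, namely the interchange of the limit with the integral over $G$; everything else is formal bookkeeping. That interchange is exactly where compactness of $X$ (hence boundedness of $f$) and finiteness of $\mu$ enter, through dominated convergence, while associativity of convolution and the measurability of $g\mapsto\rho(f\circ g)$ rest on continuity of the action together with Fubini's theorem.
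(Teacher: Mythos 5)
Your proof is correct, but it takes a different route from the paper's. You shift the index and pass to the limit directly: $\mu^{\ast(n+1)}\ast\pi=\mu\ast(\mu^{\ast n}\ast\pi)$, the left side tends to $\nu$ as a tail of the convergent sequence, the right side tends to $\mu\ast\nu$ by weak-$\ast$ continuity of $\rho\mapsto\mu\ast\rho$ (which you justify via dominated convergence, using continuity of the action and boundedness of $f$), and uniqueness of weak-$\ast$ limits finishes. The paper instead routes through the Ces\`aro average: it notes that convergence of $\nu_n\defeq\mu^{\ast n}\ast\pi$ forces convergence of $\frac{1}{n}\sum_{k=1}^n\nu_k$ to the same limit, writes $\nu$ as this average plus a vanishing error $\Delta_n$, applies $\mu\ast{}$ and telescopes the sum to get $\mu\ast\nu-\nu=\frac{\nu_{n+1}-\nu_1}{n}+\mu\ast\Delta_n-\Delta_n\to 0$. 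This is the classical Krylov--Bogolyubov-style argument alluded to in the remark after Theorem \ref{MinkConv}, and in that form it shows more generally that Ces\`aro limit points are stationary; but as written it still needs the same continuity fact you prove (the paper asserts $\mu\ast\Delta_n\to 0$ without justification). Your version is shorter, avoids the detour through averages, and has the merit of making explicit the one analytic step (interchange of limit and integral over $G$) on which both arguments actually rest.
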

\begin{proof}
let $\nu_n\defeq\mu^{\ast n}\ast \pi$. Since the space of measures is metric and since $\nu_n$ converges to $\nu$, the  Cesaro average $\frac{1}{n}\sum_{k=1}^n \nu_k$ converges to $\nu$ as well. The difference measure $\Delta_n\defeq\nu-\nu_n$ has $\lim_{n\to\infty}\Delta_n=0$. Then
$$|\mu\ast\nu-\nu|=|\mu * (\frac{1}{n}\sum_{k=1}^n \mu^{*k}*\pi + \Delta_n)- (\frac{1}{n}\sum_{k=1}^n \mu^{*k}*\pi + \Delta_n)|=$$
$$|\frac{\nu_{n+1}-\nu_1}{n}+\mu*\Delta_n-\Delta_n|\pd$$
Since $\nu_k$ are probability measures for all $k$ and since $\mu * \Delta_n$ tends to 0 we get
$$|\mu\ast\nu-\nu|=\lim_{n\to\infty}|\mu\ast\nu-\nu|=$$
$$\lim_{n\to\infty}|\frac{\nu_{n+1}-\nu_1}{n}+\mu*\Delta_n-\Delta_n|=0\pd$$
\end{proof}
The fact that the random walk converges both to a stationary measure and the word metric limit proves Theorem \ref{WordStationary}. One can check that the conditions of Theorem \ref{WordStationary} apply to the Farey group acting on the projective line thus Theorem \ref{MinkConv} follows as well.
\section{Lattice action on $\mathbbm{R}^2$}
\subsection{Settings and results}
The results presented in the previous chapter have shown that studying a random walk on a space can shed light on the word metric problem. We therefore proceed to study the asymptotical distribution problem for a random walk on the Euclidean plane. More precisely, we set a probability measure $\mu$ on $G=SL(2,\mathbbm{R})$ and ask if for a given point $x_0\in X = \mathbbm{R}^2\setminus \{0\}$there exists a normalization function $\Psi(n):\mathbbm{N}\to\mathbbm{R}$ such that the sequence $\Psi(n)\mu^{*n}*\delta_{x_0}$ converges in weak-$\ast$ topology, and if so to what measure. Notice that the convolution is defined with respect to the usual linear matrix action on the plane.\\\\ 
This problem has not been generally solved yet. We first suggest a variant of it that seems both natural and easier. Let $\lambda_1$ be the top Lyapunov exponent associated with $\mu$, defined by
$$\lambda_1 = \lim_{n\to\infty}\frac{1}{n}\int_{G}\log ||g||d\mu^{*n}(g).$$
We ask rather the sequence $\Psi(n)\mu^{*n}*\delta_{e^{-\lambda_1 n}x_0}$ converges in weak-$\ast$ topology to a space avarage $\bar{\nu}$. Equivalently, we ask if exists a normalization function $\Psi(n)$ such that for any $f\in C_c(X)$ the sequence
$$\lim_{n\to\infty}\Psi(n)\int_G f(e^{-\lambda_1 n}g x_0)d\mu^{*n}(g)$$
converges. Re-scaling using the top Lyapunov exponent is somewhat natural. Informally speaking, for a given $x_0$ almost every walk has $\frac{|g_1g_2...g_nx_0 - x_0|}{e^{n\lambda_1}} \to 1$. The suggested re-scale stops the points from drifting with exponential speed, thus makes it easier to study the structure of the resulting distribution. Notice that even with re-scaling almost every walk drifts to either $\infty$ or $0$, so the proportion of walks landing in any compact set out of all walks converges to $0$. Re-scaling by $e^{-n\lambda_1}$ only slows down the drift to a sub-exponential pace.\\\\
In the first section we shall show that under some assumptions on $G$ and assuming $\bar{\nu}$ can be decomposed to radial and angular measures, the measure $\bar{\nu}$ can be precisely described. Turns out that under these assumptions $\bar{\nu}$ is locally finite, infinite and stationary with respect to $\mu$. The main tool used in the above results is a recent central limit theorem by Benoist-Quint \cite{Benoist}. This theorem states that radial behavior of $\mu^{*n}*\delta_{x_0}$ can be approximated with a normal distribution with increasing mean and variance. Through the second part of this chapter we will explore what can be deduced if a stronger approximation assumption is being used.
\subsection{Theorem and proof}
We first define two properties needed to state and prove the main theorem.
\begin{definition}
Let $\mu$ be a measure on $G=SL(2,\mathbbm{R})$. Denote by $||g||$ the usual Euclidean norm on $G$ and by $G_\mu$ the closed semigroup spanned by the support of $\mu$.
\begin{enumerate}
    \item $\mu$ is said to have \textit{finite exponential moment} if exists $\alpha>$ such that
$$\int_G ||g||^\alpha d\mu(g)<\infty.$$
   \item $G_\mu$ is said to be \textit{strongly irreducible} if no proper finite union of vector subspaces in $\mathbbm{R}^2$ is $G_\mu$ invariant.
\end{enumerate}
\end{definition}

\begin{theorem}
\label{MainThm}
Let $\mu$ be a Borel probability measure on $SL(2,\mathbbm{R})$ with finite exponential moment such that $G_\mu$ is strongly irreducible and unbounded with respect to the Euclidean norm on $G$. Let $x_0\in X$ and assume $\Psi(n)\mu^{*n}*\delta_{e^{-\lambda_1 n}x_0}$ converges in weak-$\ast$ to $\bar{\nu}\neq0$, with $\Psi(n)$ being some normalization function. Further assume that $\bar{\nu}$ can be decomposed to a radial measure on $\mathbbm{R}^+$ and probability angular measures on $\mathbbm{P}^1$, that is $\bar{\nu}=\rho\otimes\nu $. Then $d\rho\propto\frac{1}{r}dr$ where $dr$ is the Lebesgue measure and $\nu$ is the unique $\mu$-stationary measure on $\mathbbm{P}^1$. In addition, $\lim_{n\to\infty}\frac{\Psi(n)}{\sqrt{n}}$ exists and is bigger then $0$.
\end{theorem}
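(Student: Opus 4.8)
The plan is to exploit three structural inputs: the stationarity that the limit inherits from the recursion $\mu^{*(n+1)}=\mu*\mu^{*n}$, the Furstenberg theory of \cite{Benoist} on the unique stationary measure on $\mathbbm{P}^1$, and the Benoist--Quint central limit theorem for the norm cocycle $\log\|gx_0\|$. I would work throughout in logarithmic polar coordinates $(u,\theta)=(\log\|x\|,[x])\in\mathbbm{R}\times\mathbbm{P}^1$, in which a matrix $g$ acts by $\theta\mapsto g\cdot\theta$ and $u\mapsto u+\log\|g\theta\|$, and the assumed decomposition reads $\bar\nu=\rho\otimes\nu$ with $\rho$ a Radon measure on $\mathbbm{R}$.

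First I would establish an exact stationarity for the rescaled walk. Writing $\tilde\mu$ for the pushforward of $\mu$ under $g\mapsto e^{-\lambda_1}g$ and using that dilations commute with the linear action, the identity $\Psi(n+1)\mu^{*(n+1)}*\delta_{e^{-\lambda_1(n+1)}x_0}=\tfrac{\Psi(n+1)}{\Psi(n)}\,\tilde\mu*\bigl(\Psi(n)\mu^{*n}*\delta_{e^{-\lambda_1 n}x_0}\bigr)$ holds for every $n$. Passing to the weak-$\ast$ limit and using that convolution by the fixed probability measure $\tilde\mu$ is weak-$\ast$ continuous on Radon measures, the left side tends to $\bar\nu$ and the bracket to $\bar\nu$, so the scalars $\Psi(n+1)/\Psi(n)$ must converge to some $L>0$ with $\bar\nu=L\,\tilde\mu*\bar\nu$. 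Projecting this relation to the compact factor $\mathbbm{P}^1$ (dilations act trivially on $\theta$) gives $\nu=L\,\mu*\nu$; integrating the constant function $1$ over the \emph{probability} measures $\nu$ and $\mu*\nu$ forces $L=1$, whence $\nu=\mu*\nu$ is $\mu$-stationary and, by Furstenberg's uniqueness theorem \cite{Benoist} under strong irreducibility and unboundedness, is \emph{the} unique stationary measure. This settles the angular claim and yields genuine stationarity $\bar\nu=\tilde\mu*\bar\nu$.

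For the radial claim I would read off the marginal: since $\bar\nu$ is a product and $\tilde\mu$-stationary, the $u$-marginal $\rho$ satisfies $\rho=\rho*\beta$, where $\beta$ is the law of $\log\|g\theta\|-\lambda_1$ with $g\sim\mu$, $\theta\sim\nu$. Furstenberg's formula $\lambda_1=\iint\log\|g\theta\|\,d\mu\,d\nu$ makes $\beta$ \emph{centered}, the finite exponential moment of $\mu$ gives it finite variance, and strong irreducibility guarantees it is non-lattice. A centered random walk on $\mathbbm{R}$ is recurrent (Chung--Fuchs), so by the Choquet--Deny/Deny characterization its invariant Radon measures are unique up to scale and equal Haar measure; thus $\rho$ is a multiple of Lebesgue measure $du$, i.e. $d\rho\propto dr/r$ back in the $r$ variable. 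Note this step uses the central limit theorem only through the centering and second-moment estimates.

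Finally, the diffusive normalization comes from the Benoist--Quint central limit theorem \cite{Benoist}: under the hypotheses the centered cocycle satisfies $(\log\|gx_0\|-\lambda_1 n)/\sqrt n\to\mathcal N(0,\sigma^2)$ with $\sigma^2>0$, so the $u$-marginal $m_n$ of $\mu^{*n}*\delta_{e^{-\lambda_1 n}x_0}$ is asymptotically $\mathcal N(0,\sigma^2 n)$. Testing the already-established convergence $\Psi(n)\,m_n\to c\,du$ (with $c>0$ since $\bar\nu\neq0$) against a fixed $\phi\in C_c(\mathbbm{R})$ and comparing with the Gaussian density $\tfrac{1}{\sqrt{2\pi\sigma^2 n}}$ at the centre identifies $\lim_{n\to\infty}\Psi(n)/\sqrt n=c\sqrt{2\pi\sigma^2}>0$. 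I expect this last matching to be the main obstacle: the plain central limit theorem controls $m_n$ only in distribution, whereas reading off the density at the centre of a fixed compact window is a local-limit statement; making it rigorous requires either a local limit theorem for the cocycle or a careful argument that leverages the \emph{assumed} existence of the nonzero limit $\rho$ to bypass it. Secondary technical points are the weak-$\ast$ continuity of $\tilde\mu*(\cdot)$ on the non-compact $X$, where the finite exponential moment is needed to prevent mass from escaping to $0$ or $\infty$, and the non-lattice hypothesis entering the Choquet--Deny step.
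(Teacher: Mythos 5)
Your plan identifies the right three inputs but assembles them in an order that cannot work, and the obstacle you flag only at the very end is in fact the central missing ingredient rather than a technicality. The step ``projecting the relation $\bar{\nu}=L\,\tilde\mu*\bar{\nu}$ to the compact factor $\mathbbm{P}^1$'' is not available: since (as you later show) $\rho$ is Lebesgue in $u=\log r$, the measure $\bar{\nu}$ has infinite total mass and its angular marginal is $\rho(\mathbbm{R}^+)\cdot\nu=\infty\cdot\nu$; a function of $\theta$ alone is not compactly supported on $\mathbbm{R}^2\setminus\{0\}$, so the weak-$\ast$ identity cannot be tested against it, and because the radial increment $\log\sigma(g,\theta)$ depends on $\theta$ the skew product does not simply marginalize. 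The paper extracts angular stationarity only \emph{after} the radial part is known, via the operator $K(f)(\theta)=\int f(r,\theta)r^{-1}dr$, whose key intertwining property $K(f\circ g)=K(f)\circ g$ is exactly the scale invariance of $r^{-1}dr$. This makes your ordering circular: your Choquet--Deny argument for the radial part needs the increment law $\beta$ to be centered, and the centering comes from Furstenberg's formula $\lambda_1=\iint\log||g\theta||\,d\mu\,d\nu$, which holds only when $\nu$ is already known to be the stationary measure --- the very fact the projection step was supposed to deliver. Without centering, Deny's theorem leaves a two-parameter family $a\,du+b\,e^{s_0u}\,du$ of $\beta$-invariant Radon measures and does not pin down $\rho$.

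The paper breaks the circle with the tool you invoke only as a wish: Theorem 16.10 of Benoist--Quint (Lemma \ref{LLT} here) is precisely a ratio/local limit theorem, $\mu^{*n}(\log|gv|-\lambda_1 n\in[a_1,a_2])\sim N_{\sqrt{ns^2}}([a_1,a_2])$ for a \emph{fixed} window. Applied to $f=\mathbbm{1}_{D_{r,R}}$ it yields in one stroke both $\rho([r,R])\propto\log R-\log r$ and $0<\lim_n\Psi(n)/\sqrt{n}<\infty$, with no appeal to stationarity; only afterwards does the paper prove $\mu*\bar{\nu}=\bar{\nu}$ (by shifting the convolution index, using $\Psi(n)/\Psi(n+1)\to1$) and push that relation down to $\mathbbm{P}^1$ through $K$, where Furstenberg's uniqueness theorem finishes the angular claim. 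So the gap is concrete: the local limit theorem must be the \emph{first} step, not an afterthought for the normalization claim. With it, the Choquet--Deny detour becomes unnecessary; without it, both your radial computation and your identification of $\Psi(n)\asymp\sqrt{n}$ are unsupported.
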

Uniqueness of stationary measure is due to a theorem by Furstenberg that can be found in \cite{Benoist}. Theorem 16.10 in \cite{Benoist} is a key component in the proof. We bring an abbreviated version which is sufficient for our needs.
\begin{lemma}
\label{LLT}
Let $\mu$ be a Borel probability measure on $SL_2(\mathbbm{R})$ with finite exponential moment such that $G_\mu$ is unbounded and strongly irreducible. Let $a_1<a_2$ and $v\in\mathbbm{R}^2$ with $|v|=1$. Then exists $s\in\mathbbm{R}$ depending on $\mu,v$ such that
$$\lim_{n\to\infty}\frac{\mu^{*n}(\log (|gv|)-\lambda_1 n \in [a_1,a_2])}{N_{\sqrt{ns^2}}([a_1,a_2])}=1,$$
where $\lambda_1$ is the first Lyapunov exponent and $N_{\sqrt{ns^2}}$ is the normal distribution centered around 0 with standard deviation equal to $\sqrt{ns^2}$.
\end{lemma}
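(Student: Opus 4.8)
Because the statement is an abbreviation of a known theorem (Theorem 16.10 of \cite{Benoist}), the most efficient route in the write-up is to quote that theorem and specialize it: take the centering to be $\lambda_1 n$, fix the interval $[a_1,a_2]$, and read off the Gaussian comparison with standard deviation $\sqrt{ns^2}$. I will instead sketch how one proves such a local limit theorem from scratch, since that is where the real content lies.

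Set $g=g_n\cdots g_1$ with $g_i$ i.i.d. $\sim\mu$ and write $X_n=\log\abs{gv}$ for the unit vector $v$. The first observation is that $X_n$ is a Birkhoff-type sum of the norm cocycle over the projective Markov chain: letting $\omega_k=\overline{g_k\cdots g_1 v}\in\mathbbm{P}^1$ and $\sigma(h,\omega)=\log\frac{\abs{hw}}{\abs{w}}$ for a unit representative $w$ of $\omega$, one has $X_n=\sum_{k=1}^n\sigma(g_k,\omega_{k-1})$. The asymptotics of such sums are governed by the family of twisted transfer operators
$$(P_{it}\varphi)(\omega)=\int_G e^{it\,\sigma(h,\omega)}\varphi(\overline{h\omega})\,d\mu(h),\qquad t\in\mathbbm{R},$$
acting on Hölder functions on $\mathbbm{P}^1$. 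At $t=0$ this is the Markov operator of the chain, which under strong irreducibility together with unboundedness (hence proximality) has a spectral gap: a simple leading eigenvalue $1$ (eigenfunction the constants, dual eigenvector the unique stationary probability measure), the rest of the spectrum strictly inside a smaller disk. This is the Le Page / Guivarc'h--Raugi contraction property, and the finite exponential moment is what makes $P_{it}$ well defined and analytic in $t$ near $0$.

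The plan is then the Nagaev--Guivarc'h method. First, perturbation theory yields, for small $\abs{t}$, a simple leading eigenvalue $\lambda(it)$ of $P_{it}$ with $\lambda(0)=1$ and expansion $\log\lambda(it)=i\lambda_1 t-\tfrac12 s^2 t^2+o(t^2)$, where $\lambda_1$ is the top Lyapunov exponent and $s^2$ the asymptotic variance of the cocycle; nondegeneracy $s^2>0$ holds because the cocycle is not a coboundary, which is forced by proximality. This already gives the central limit theorem for $X_n-\lambda_1 n$ via $\mathbbm{E}[e^{it X_n}]=\lambda(it)^n(c(t)+o(1))+R_n(t)$, the spectral decomposition isolating the leading term. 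Second, to pass from a central to a \emph{local} limit theorem one inverts the Fourier transform and splits the $t$-integral into three ranges: $\abs{t}\le\delta/\sqrt n$, where rescaling gives $\lambda(it/\sqrt n)^n\to e^{-s^2t^2/2}$ and produces the Gaussian density $\frac{1}{\sqrt{2\pi ns^2}}$; an intermediate range $\delta/\sqrt n\le\abs{t}\le\eta$, controlled by $\abs{\lambda(it)}<1$ from the expansion; and a tail $\abs{t}\ge\eta$, where one needs $\|P_{it}^n\|$ to decay exponentially. Assembling the three pieces gives $\sqrt n\,\mu^{*n}(X_n-\lambda_1 n\in[a_1,a_2])\to\frac{a_2-a_1}{\sqrt{2\pi s^2}}$, which is exactly the claimed ratio converging to $1$.

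The main obstacle is the tail range $\abs{t}\ge\eta$: controlling it requires the norm cocycle to be \emph{aperiodic} (non-arithmetic), i.e. not cohomologous to a cocycle valued in a proper closed subgroup of $\mathbbm{R}$. This is precisely the difference between a central and a local limit theorem, and it is the genuinely hard dynamical input, again flowing from strong irreducibility and unboundedness. Establishing the spectral gap of $P_{it}$ on an appropriate Hölder space and this aperiodicity are the two technical pillars; once they are in place the inversion argument is routine. Since all of this is carried out in \cite{Benoist}, within this paper it suffices to invoke that reference, which is the approach I would take in the final write-up.
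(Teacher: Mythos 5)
Your proposal matches the paper's treatment: the paper gives no proof of this lemma, stating only that it is an abbreviated form of Theorem 16.10 in \cite{Benoist}, and your primary recommendation is likewise to invoke that reference. Your supplementary sketch of the Nagaev--Guivarc'h spectral method (twisted transfer operators, spectral gap from strong irreducibility and unboundedness, three-range Fourier inversion, and aperiodicity as the key input for the local --- as opposed to central --- limit theorem) is an accurate outline of how the cited result is actually established, but it goes beyond what the paper itself supplies.
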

We are now ready to prove the first part of theorem \ref{MainThm} regarding the radial part of the measure.
\begin{lemma}
In the settings of theorem \ref{MainThm}, $d\rho \propto \frac{1}{r}dr$ where $dr$ is the Lebesgue measure.
\end{lemma}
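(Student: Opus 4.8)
The plan is to extract the radial marginal of $\Psi(n)\mu^{*n}*\delta_{e^{-\lambda_1 n}x_0}$ and identify it directly through the local limit theorem of Lemma \ref{LLT}. Since $\mathbbm{P}^1$ is compact, any $f\in C_c(\mathbbm{R}^+)$ extends to a function $F(r,\theta)=f(r)$ lying in $C_c(X)$, and because $\bar{\nu}=\rho\otimes\nu$ with $\nu$ a probability measure, testing against $F$ recovers the radial factor: $\bar{\nu}(F)=\rho(f)$. Writing $x_0=r_0 v_0$ with $|v_0|=1$, the norm factors as $|e^{-\lambda_1 n}g x_0|=r_0 e^{-\lambda_1 n}|gv_0|$, so by hypothesis
$$\Psi(n)\,\mu^{*n}*\delta_{e^{-\lambda_1 n}x_0}(F) = \Psi(n)\int_G f\big(r_0 e^{-\lambda_1 n}|gv_0|\big)\,d\mu^{*n}(g)\xrightarrow[n\to\infty]{}\rho(f)\pd$$
The task is therefore to compute the limit of the right-hand integral.

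First I would pass to the log-radial coordinate $u=\log|gv_0|-\lambda_1 n$, so that $r_0 e^{-\lambda_1 n}|gv_0|=r_0 e^u$ and the integrand becomes $f(r_0 e^u)$, supported on a fixed compact set of $u$-values determined by $\operatorname{supp}(f)$. Lemma \ref{LLT} with $v=v_0$ states that the law of $u$ under $\mu^{*n}$ is asymptotic to $N_{\sqrt{ns^2}}$; on a fixed compact interval $[a_1,a_2]$ the Gaussian density $\tfrac{1}{\sqrt{2\pi ns^2}}e^{-u^2/(2ns^2)}$ converges uniformly to the constant $\tfrac{1}{\sqrt{2\pi ns^2}}$, whence $\sqrt{n}\,\mu^{*n}(u\in[a_1,a_2])\to\tfrac{a_2-a_1}{\sqrt{2\pi s^2}}$. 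Approximating $f(r_0 e^u)$ above and below by step functions in $u$ and summing these interval estimates yields
$$\sqrt{n}\int_G f\big(r_0 e^u\big)\,d\mu^{*n}(g) \xrightarrow[n\to\infty]{} \frac{1}{\sqrt{2\pi s^2}}\int_{\mathbbm{R}} f(r_0 e^u)\,du = \frac{1}{\sqrt{2\pi s^2}}\int_0^\infty f(r)\,\frac{dr}{r}\pd$$
the last equality by the substitution $r=r_0 e^u$, $du=dr/r$. This already shows the $\sqrt{n}$-normalized radial marginal converges to a multiple of $\tfrac{1}{r}dr$.

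To conclude I would compare the two normalizations. The displayed limit is finite and strictly positive for a suitable $f\ge 0$, and since $\bar{\nu}\neq 0$ forces $\rho\neq 0$, for $\Psi(n)\,\mu^{*n}*\delta_{e^{-\lambda_1 n}x_0}(F)$ to converge to the nonzero value $\rho(f)$ the ratio $\Psi(n)/\sqrt{n}$ must converge to a positive constant $c$ — which is exactly the final assertion of Theorem \ref{MainThm}. Feeding $\Psi(n)\sim c\sqrt{n}$ back in gives $\rho(f)=\tfrac{c}{\sqrt{2\pi s^2}}\int_0^\infty f(r)\tfrac{dr}{r}$ for every $f\in C_c(\mathbbm{R}^+)$, i.e. $d\rho\propto\tfrac{1}{r}dr$.

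The main obstacle is the rigorous promotion of Lemma \ref{LLT}, stated as a ratio of probabilities over a single fixed interval, to convergence of integrals against continuous test functions: I must ensure the step-function approximation is uniform over the compact $u$-support of $f$ and that the error terms do not accumulate when summed over a partition refining with $n$. A secondary point requiring care is that the product decomposition $\bar{\nu}=\rho\otimes\nu$ is used precisely to guarantee that the radial marginal of the limit is $\rho$ itself rather than a mixture; here this is immediate, since $F$ is a genuine element of $C_c(X)$ and so the marginal of the weak-$\ast$ limit coincides with the limit of the marginals.
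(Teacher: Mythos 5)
Your proposal is correct and follows essentially the same route as the paper: both identify the radial marginal via Lemma \ref{LLT}, use the flatness of the Gaussian $N_{\sqrt{ns^2}}$ on compact log-radial intervals to force $\Psi(n)/\sqrt{n}\to c>0$, and obtain $d\rho\propto r^{-1}dr$ from the substitution $u=\log r$. The only difference is cosmetic: the paper tests directly against annulus indicators $\mathbbm{1}_{D_{r,R}}$, which match the interval form of Lemma \ref{LLT} without your step-function approximation step, while your use of $C_c$ test functions (and your explicit handling of $|x_0|=r_0$) is a slightly more careful packaging of the same argument.
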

\begin{proof}
Consider the limit
$$\bar{\nu}(f)=\lim_{n\to\infty}\Psi(n)\int_G f(e^{-\lambda_1 n}g x_0)d\mu^{*n}(g) .$$
We set $f = \mathbbm{1}_{D_{r,R}}$ with $R>r\in\mathbbm{R}$ and $D_{r,R}=\{x\in X:r\le|x|\le R\}$. The definition of $\bar{\nu}$ yields $$\bar{\nu}(f)=\lim_{n\to\infty}\Psi(n)\mu^{*n}(|e^{-\lambda_1 n}g x_0|\in [r,R])=$$
$$\lim_{n\to\infty}\Psi(n)\mu^{*n}(\log|g x_0|-\lambda_1 n\in[\log r,\log R])=$$
$$\lim_{n\to\infty}\Psi(n)N_{\sqrt{ns^2}}([\log r,\log R])
\frac{\mu^{*n}(\log|g x_0|-\lambda_1 n\in[\log r,\log R])}{N_{\sqrt{ns^2}}([\log r,\log R])}.$$
Since $\bar{\nu}\neq 0$ exists and as $0<\lim_{n\to\infty}\sqrt{n}N_{\sqrt{ns^2}}([\log r,\log R])<\infty$ we conclude that $0<\lim_{n\to\infty}\frac{\Psi(n)}{\sqrt{n}}<\infty$.  Using lemma \ref{LLT} we get
$$\bar{\nu}(f)= k\lim_{n\to\infty}\frac{1}{\sqrt{2\pi s^2}}\int_{\log r}^{\log R}\exp(-\frac{x^2}{2s^2n})=k\frac{\log R-\log r}{\sqrt{2\pi s^2}},$$
where $k\in\mathbbm{R}$ is some constant which depends on the choice of $\Psi(n)$. On the other hand
$$\bar{\nu}(f)=\int_r^R d\rho=\rho([r,R]),$$
and therefore $\rho([r,R])=\frac{k}{\sqrt{2\pi s^2}}(\log(R)-\log(r))$.
Intervals on $\mathbbm{R}^+$ are a generating algebra for the Borel $\sigma$-algebra so by unique extension we get that $d\rho \propto r^{-1}dr$.
\end{proof}
We shall now prove the claim regarding the angular part of the decomposition. This part is largely inspired by \cite{mac}.
\begin{lemma}
In the settings of theorem \ref{MainThm}, $\bar{\nu}$ is homogenous of degree 0. That is, for every measurable $E$ and for any $t>0$, it holds that $\bar{\nu}(tE)=\bar{\nu}(E)$
\end{lemma}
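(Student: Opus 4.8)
The plan is to pass to polar coordinates and exploit the scale invariance of the radial measure established in the previous lemma. Work in the polar decomposition underlying the hypothesised factorisation $\bar\nu=\rho\otimes\nu$, writing a point of $X=\mathbbm{R}^2\setminus\{0\}$ as a radius $r=|x|\in\mathbbm{R}^+$ together with a direction $\theta\in\mathbbm{P}^1$, so that the dilation $x\mapsto tx$ becomes $(r,\theta)\mapsto(tr,\theta)$. For $t>0$ this acts only on the radial coordinate and fixes the angular one (this is precisely where positivity of $t$ is used). Under the decomposition $\bar\nu=\rho\otimes\nu$ the claim then reduces to the single statement that $\rho$ is invariant under the multiplicative action $r\mapsto tr$ on $\mathbbm{R}^+$.

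First I would verify this radial invariance directly. From the previous lemma $d\rho\propto r^{-1}dr$, so for any interval $[r,R]\subset\mathbbm{R}^+$ the change of variables $r\mapsto tr$ gives $\rho([tr,tR])=\frac{k}{\sqrt{2\pi s^2}}(\log(tR)-\log(tr))=\frac{k}{\sqrt{2\pi s^2}}(\log R-\log r)=\rho([r,R])$; equivalently, $r^{-1}dr$ is the Haar measure of the multiplicative group $(\mathbbm{R}^+,\cdot)$ and is therefore dilation invariant.

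Next I would upgrade this to arbitrary measurable sets. Consider boxes $E=I\times A$ with $I\subset\mathbbm{R}^+$ an interval (bounded away from $0$ and $\infty$) and $A\subset\mathbbm{P}^1$ measurable; these form a $\pi$-system generating the Borel $\sigma$-algebra of $X$. For such $E$, Fubini together with the radial invariance gives $\bar\nu(tE)=\rho(tI)\,\nu(A)=\rho(I)\,\nu(A)=\bar\nu(E)$. Both set functions $E\mapsto\bar\nu(tE)$ and $E\mapsto\bar\nu(E)$ are locally finite Borel measures agreeing on this generating $\pi$-system, so by uniqueness of extension — the same argument used to pin down $\rho$ above — they coincide on all Borel sets, which is the assertion.

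I do not expect a serious obstacle here, since the main analytic input (the explicit form of $\rho$) is already in hand; the only point requiring care is the measure-theoretic passage from product boxes to general measurable $E$, and the implicit use of the product structure $\bar\nu=\rho\otimes\nu$ granted by hypothesis. It is worth remarking why one routes through polar coordinates at all: attacking homogeneity straight from the limit definition would replace the event $e^{-\lambda_1 n}g x_0\in tE$ by $e^{-\lambda_1(n+\lambda_1^{-1}\log t)}g x_0\in E$, i.e.\ a non-integer shift of the index $n$, which the discrete convolution powers $\mu^{*n}$ cannot absorb directly. The scale invariance of $r^{-1}dr$ is precisely the continuous shadow of that shift.
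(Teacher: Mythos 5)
Your proposal is correct and follows essentially the same route as the paper: both reduce to product sets $I\times A$, use the change of variables $r\mapsto tr$ together with $d\rho\propto r^{-1}dr$ to get $\rho(tI)=\rho(I)$, and then extend from boxes to general measurable sets. Your $\pi$-system/uniqueness-of-extension justification for that last step is in fact a cleaner statement of what the paper glosses as density of box indicators.
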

\begin{proof}
Take $E=A\times B$ with $A=[a,b]$ interval in $\mathbbm{R}^+$ and $B\subset \mathbbm{P}^1$ measurable. Then
$$\bar{\nu}(tE)=\nu(B)\int_{ta}^{tb} \frac{1}{x}dx=\nu(B)\int_a^b\frac{1}{ty}tdy=\nu(B)\rho(A)=\bar{\nu}(E)$$
Linear combinations of such box indicators are dense in indicators and so we are done.
\end{proof}
\begin{lemma}
$$\mu * \bar{\nu}=\bar{\nu}$$
\end{lemma}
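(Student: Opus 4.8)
The plan is to use the one-step recursion $\mu^{*(n+1)}=\mu*\mu^{*n}$ together with the dilation invariance of $\bar{\nu}$ just established. For $t>0$ write $D_t$ for the dilation $x\mapsto tx$ on $X$, and set $y_n\defeq e^{-\lambda_1 n}x_0$ and $\nu_n\defeq\Psi(n)\,\mu^{*n}*\delta_{y_n}$, so that $\nu_n\to\bar{\nu}$ in weak-$\ast$ by hypothesis. Two elementary facts drive the argument: since the linear action of $G$ commutes with dilations, $g(tx)=t(gx)$, one has $\mu*(D_t)_*\pi=(D_t)_*(\mu*\pi)$ for every measure $\pi$ on $X$; and the homogeneity of degree $0$ of $\bar{\nu}$ proved above says precisely that $(D_t)_*\bar{\nu}=\bar{\nu}$ for all $t>0$.

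First I would record the exact identity linking $\mu*\nu_n$ to $\nu_{n+1}$. Linearity gives $\mu^{*n}*\delta_{y_n}=(D_{e^{-\lambda_1 n}})_*(\mu^{*n}*\delta_{x_0})$, and likewise for $n+1$; feeding $\mu*\mu^{*n}*\delta_{x_0}=\mu^{*(n+1)}*\delta_{x_0}$ through the commutation relation and collapsing the dilation factors $e^{-\lambda_1 n}$ and $e^{\lambda_1(n+1)}$ into a single $e^{\lambda_1}$ yields
\[
\mu*\nu_n=\frac{\Psi(n)}{\Psi(n+1)}\,(D_{e^{\lambda_1}})_*\,\nu_{n+1}.
\]
Letting $n\to\infty$, the previous lemma gives $0<\lim_n\Psi(n)/\sqrt{n}<\infty$, hence $\Psi(n)/\Psi(n+1)\to1$; as $(D_{e^{\lambda_1}})_*$ is weak-$\ast$ continuous and $\nu_{n+1}\to\bar{\nu}$, the right-hand side tends to $(D_{e^{\lambda_1}})_*\bar{\nu}=\bar{\nu}$. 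Thus $\mu*\nu_n\to\bar{\nu}$.

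The remaining and genuinely delicate step is to identify this limit with $\mu*\bar{\nu}$, i.e.\ to justify that convolution by $\mu$ is continuous along $(\nu_n)$. This is not automatic: $\mu$ is not compactly supported and $X$ is noncompact, so $\pi\mapsto\mu*\pi$ need not be weak-$\ast$ continuous on $C_c(X)$. For $f\in C_c(X)$ put $f_g(x)\defeq f(gx)$; then $f_g\in C_c(X)$ and $\nu_n(f_g)\to\bar{\nu}(f_g)$ for each $g$, so it suffices to dominate $\sup_n|\nu_n(f_g)|$ by a $\mu$-integrable function and apply dominated convergence inside $\int_G\nu_n(f_g)\,d\mu(g)$. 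If the support of $f$ lies in $\{r_0\le|x|\le R_0\}$, then $\{h:ghy_n\in\operatorname{supp}f\}$ forces $\log|hx_0|-\lambda_1 n$ into an interval $I_g$ centred near $0$ of length $\log(R_0/r_0)+2\log||g||$, using the singular-value estimate $\bigl|\log|gv|\bigr|\le\log||g||$ for unit $v$.

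A uniform-in-interval local limit upper bound of the shape $\mu^{*n}(\log|hx_0|-\lambda_1 n\in I)\le C(|I|+1)/\sqrt{n}$ then gives $|\nu_n(f_g)|\le C'||f||_\infty(1+\log||g||)$ uniformly in $n$, and finite exponential moment guarantees $\int_G\log||g||\,d\mu(g)<\infty$, supplying the dominating function. The main obstacle is exactly this uniformity over unbounded $g$: Lemma \ref{LLT} furnishes the asymptotic ratio only for a fixed interval, so I would upgrade it to the uniform upper bound above (the natural strengthening of the Benoist--Quint local limit theorem), handling moderate $g$ with the fixed-interval statement and the tail $\{||g||>T\}$ through the exponential moment. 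With the dominated convergence in hand one concludes $\mu*\bar{\nu}=\bar{\nu}$.
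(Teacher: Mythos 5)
Your argument is essentially the paper's: both rewrite $\mu*\bar{\nu}(f)$ by inserting the defining limit for $\bar{\nu}(f\circ g)$, collapse $\mu*\mu^{*n}$ into $\mu^{*(n+1)}$, absorb the resulting dilation by $e^{\lambda_1}$ using the degree-$0$ homogeneity of $\bar{\nu}$, and use $\Psi(n)/\Psi(n+1)\to 1$. Where you go beyond the paper is in flagging the interchange of $\lim_{n}$ with $\int_G(\cdot)\,d\mu(g)$: the paper treats its error term $\Delta(n)$ as if it did not depend on $g$ and passes to the limit under the integral without comment, whereas your dominated-convergence step --- the $C(1+\log||g||)$ bound via a uniform local-limit upper estimate together with the finite exponential moment --- is precisely the justification that argument silently requires (and the uniform upper bound you invoke is the one ingredient still to be checked against Benoist--Quint).
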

\begin{proof}
$$\mu * \bar{\nu}(f)=\int_G\int_Xf(gv)d\bar{\nu}(v)d\mu(g)=$$
$$\int_G(\Psi(n)\int_G f(e^{-\lambda_1 n}ghv)d\mu^{*n}(h)+\Delta(n))d\mu(g)=$$
where $\Delta(n)=\int_{\mathbbm{R}^2}f(gv)d\bar{\nu}(v)-\int_G f(e^{-\lambda_1 n}ghv)d\mu^{*n}(h)$ has $\lim_{n\to\infty}\Delta(n)=0$ Then
$$\frac{\Psi(n)}{\Psi(n+1)}\Psi(n+1)\int_G f(e^{\lambda_1} e^{-\lambda_1 (n+1)}gv)d\mu^{*(n+1)}(g)+\Delta(n)=$$
define $h(x)=f(e^{\lambda_1} x)$:
$$\frac{\Psi(n)}{\Psi(n+1)}\Psi(n+1)\int_G h(e^{-\lambda_1 (n+1)}gv)d\mu^{*(n+1)}(g)+\Delta(n)=$$
$$\frac{\Psi(n)}{\Psi(n+1)}(\int_{\mathbbm{R}^2}h(v)d\bar{\nu}(v)+\Delta (n+1))+\Delta(n)=\frac{\Psi(n)}{\Psi(n+1)}(\bar{\nu}(f)+\Delta (n+1))+\Delta(n)$$
We can now take $n$ to $\infty$ to achieve
$$\mu * \bar{\nu}(f)=\lim_{n\to\infty}\mu * \bar{\nu}(f)=$$
$$\lim_{n\to\infty}\frac{\Psi(n)}{\Psi(n+1)}(\bar{\nu}(f)+\Delta (n+1))+\Delta(n)=\bar{\nu}(f).$$
\end{proof}
\begin{lemma}
If $\mu * \bar{\nu} = \bar{\nu}$ then $\mu * \nu = \nu$ 
\end{lemma}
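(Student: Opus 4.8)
The plan is to exploit the scale-invariance of the radial measure $\frac{dr}{r}$, which is exactly the Haar measure of the multiplicative group $\mathbbm{R}^+$. First I would fix radial-angular coordinates on $X=\mathbbm{R}^2\setminus\{0\}$: writing $v=ru_\theta$ with $r=|v|\in\mathbbm{R}^+$ and $u_\theta$ a unit representative of the direction $\theta\in\mathbbm{P}^1$, the decomposition $\bar{\nu}=\rho\otimes\nu$ means that for a test function $f(r,\theta)$ one has $\bar{\nu}(f)=\int_{\mathbbm{P}^1}\int_{\mathbbm{R}^+}f(r,\theta)\,d\rho(r)\,d\nu(\theta)$, where $d\rho\propto\frac{dr}{r}$ by the radial lemma established above.

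The key observation is how a single $g\in G$ acts in these coordinates. Since $gv=r\,(gu_\theta)=\bigl(r\,|gu_\theta|\bigr)\cdot\frac{gu_\theta}{|gu_\theta|}$, the image $gv$ has radius $r\,|gu_\theta|$ and direction equal to the induced projective action $g\theta$; crucially the radial stretching factor $|gu_\theta|$ is well defined on $\mathbbm{P}^1$ and depends only on $\theta$, not on $r$. I would then compute the pushforward $g_\ast\bar{\nu}$ by the substitution $s=r\,|gu_\theta|$ in the radial integral. Because $\frac{dr}{r}=\frac{ds}{s}$, the radial factor is left completely unchanged, and one finds $g_\ast\bar{\nu}=\rho\otimes(g_\ast\nu)$, where $g_\ast\nu$ denotes the projective pushforward of the angular measure.

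Integrating against $\mu$ and pulling the fixed factor $\rho$ outside the integral over $G$ gives $\mu\ast\bar{\nu}=\rho\otimes(\mu\ast\nu)$. Comparing this with the hypothesis $\mu\ast\bar{\nu}=\bar{\nu}=\rho\otimes\nu$ and using uniqueness of a product decomposition with fixed nonzero first factor $\rho$, we conclude $\mu\ast\nu=\nu$. To make this last step rigorous I would test both sides against product functions $f(r,\theta)=\phi(r)\psi(\theta)$, choosing $\phi\ge 0$ supported in a compact subset of $(0,\infty)$ with $\int\phi\,d\rho=c\in(0,\infty)$; then $\bar{\nu}(f)=c\,\nu(\psi)$ and $\mu\ast\bar{\nu}(f)=c\,(\mu\ast\nu)(\psi)$, so equality forces $\nu(\psi)=(\mu\ast\nu)(\psi)$ for every $\psi\in C(\mathbbm{P}^1)$, which by Riesz representation yields $\mu\ast\nu=\nu$.

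The main obstacle, and really the only nontrivial point, is the interchange of pushforward and radial substitution: one must verify that the scale-invariance of $\frac{dr}{r}$ exactly absorbs the $\theta$-dependent stretching factor $|gu_\theta|$, so that the radial marginal is genuinely preserved while only the angular factor transforms. Everything else is an application of Fubini's theorem together with uniqueness of product measures. A minor technical care point is that $\bar{\nu}$ has infinite total mass, which is why I would phrase the final uniqueness argument through compactly supported radial test functions rather than attempting to "cancel" an infinite radial factor directly.
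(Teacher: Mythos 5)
Your proposal is correct and follows essentially the same route as the paper: the paper's radial integration operator $K(f)(\theta)=\int f(r,\theta)\frac{dr}{r}$ and its intertwining property $K(f\circ g)=K(f)\circ g$, proved via the substitution $z=r\sigma(g,\theta)$, is exactly the dual formulation of your pushforward identity $g_\ast\bar{\nu}=\rho\otimes(g_\ast\nu)$, with the scale-invariance of $\frac{dr}{r}$ absorbing the direction-dependent stretching factor in both cases. Your concluding step via compactly supported product test functions is a slightly more explicit version of the paper's appeal to surjectivity of $K$.
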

\begin{proof}
Consider the radial integration operator $K:C_c(\mathbbm{R}^2)\to C(\mathbbm{P}^1)$ defined by integration over the real line with the following measure 
$$K(f)(\theta)=\int_\mathbbm{R}f(r,\theta)\frac{1}{r}dr.$$ 
It has $\nu(K(f))=\bar{\nu}(f)$. Let $\sigma(g,\theta)=\frac{|g v|}{|v|}$ be the size cocycle, with $g$ acting linearly on a vector with angle $\theta$. Using a simple change of variables $z=r\sigma(g,\theta)$ we see
$$K(f\circ g)(\theta)=\int_\mathbbm{R}f(r\sigma(g,\theta),g\theta)\frac{1}{r}dr=\int_\mathbbm{R}f(z,g\theta)\frac{1}{z}dz=K(f)(g \theta).$$
We then compute:
$$\nu(K(f))=\bar{\nu}(f)=\mu * \bar{\nu}(f)=\int_G \bar{\nu}(f\circ g)=$$
$$\int_G \nu(K(f\circ g)(\theta))=\int_G \nu(K(f)(g\theta))=\mu * \nu(K(f))$$
And since $K$ is surjective we are done.
\end{proof}
Theorem \ref{MainThm} relies on two assumptions, that $\Psi(n)\mu^{*n}\ast\delta_{e^{-\lambda_1n}x_0}$ converges, and that the limit measure $\bar{\nu}$ can be decomposed. These assumptions are not trivial at all and one should ponder whether exists a choice of $\mu,x_0 $ and $\Psi(n)$ for which they hold. The following numerical calculation suggest that these assumptions hold for the Farey lattice, which was studied in the first part of the research. We sample 1,000,000 walks with 120 steps each from the random walk in Theorem \ref{MinkStationary}. 
\begin{figure}[h]
\label{GraphComp}
\centering
\includegraphics[width=\textwidth]{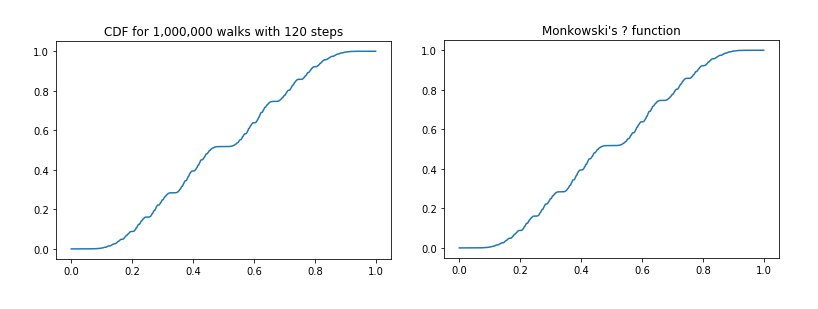}
\caption{Comparison between simulation CDF and Minkowski's ? function}
\end{figure}
We then act linearly with the sampled matrices on the vector $\icol{1\\0}$ and pull the resulting vectors to $\mathbbm{R}$ using  $\icol{x\\y}\xrightarrow{}\frac{x}{y}$. The plots in figure 4 compare the CDF for all vectors with size between 1 and 10,000 to Minkowski's question mark function. If the theorem holds, then the CDF should match the angular part of the decomposition. It is clear that the resulting CDF is identical to Minkowski's question mark function, which is the stationary measure in this case.
\subsection{Different regularization constant}
One can ask what would happen if the regularization constant has different value than $\lambda_1$. Unfortunately, the lack of suitable limit theorem prevents us from taking the same approach. We can still achieve some interesting results regarding the limit measure $\bar{\nu}$ under stricter convergence assumptions. While for theorem \ref{MainThm} there are convincing numerical results, this part is more of a shot in the dark. Hence it is written in a less rigorous fashion. This time we assume a stronger version of lemma $\ref{LLT}$.
\begin{definition}
\label{strong_to_normal}
We say a random walk generated by a measure $\mu$ over $SL(2,\mathbbm{R})$ \textit{strongly converges to normal} if for any $c
\in
\mathbbm{R}$,
$$\lim_{n\to\infty}\frac{\mu^{*n}(\log (|gv|)-\lambda_1 n \in [a_1,a_2]+cn)}{N_{\sqrt{ns^2}}([a_1,a_2]+cn)}=1.$$
\end{definition}
The special case $c=0$ was proved in \cite{Benoist}. Let $\alpha\in\mathbbm{R}$ and assume $
\mu$ strongly converges to normal and $\Psi(n)\mu^{*n}*\delta_{e^{-\alpha n}x_0}$ converges to some space average $\bar{\nu}$ which can be decomposed to radial and angular measures as before. We again use $D_{r,R}$ to see
$$\bar{\nu}(D_{r,R})=\lim_{n\to\infty}\Psi(n)\int_G\mathbbm{1}_{D_{r,R}}(e^{-\alpha n}g v)d\mu^{*n}=$$
$$\lim_{n\to\infty}\Psi(n)\mu^{*n}(\log|g v|-\lambda n\in[\log r, \log R]+(\alpha-\lambda)n).$$
Using definition \ref{strong_to_normal} we then get
$$\bar{\nu}(D_{r,R})=\lim_{n\to\infty}\Psi(n)N_{\sqrt{ns^2}}([\log r,\log R]+n(\alpha-\lambda_1))=$$
$$\lim_{n\to\infty}\Psi(n)P( Z\in\frac{1}{s\sqrt{n}}([\log r,\log R]-n(\lambda_1-\alpha)))=$$
$$\lim_{n\to\infty}\Psi(n)(P( Z\ge\frac{\log r-n(\lambda_1-\alpha)}{s\sqrt{n}})-P( Z\ge\frac{\log R-n(\lambda_1-\alpha)}{s\sqrt{n}})),$$
where $Z$ is the standard normal distribution with mean 0 and variance 1. For an approximation of $P(Z\ge\frac{\log r-n(\lambda_1-\alpha)}{s\sqrt{n}})$ we use a tail approximation
$$P(Z\ge x)=\frac{1}{\sqrt{2\pi}}\int_x^\infty e^{-t^2/2}dt\le \frac{1}{\sqrt{2\pi}}\int_x^\infty\frac{t}{x}e^{-t^2/2}dt=\frac{e^{-x^2/2}}{\sqrt{2\pi}x},$$
$$P(Z\ge x)>P(Z\in[x,x+\frac{1}{x}])\ge\frac{1}{x\sqrt{2\pi}}e^{-\frac{(x+
\frac{1}{x})^2}{2}}.$$
If $\alpha>\lambda_1$ the argument tends to infinity so we can approximate $P(Z\ge x)\approx \frac{1}{\sqrt{2\pi}x}e^{-x^2/2}$. If $\alpha < \lambda_1$ we can use the complement probability. Assume w.l.o.g that $\alpha>\lambda_1$ and let $b\defeq\frac{\lambda_1-\alpha}{s}$ and $a_r\defeq \frac{\log r}{s}$. Then
$$P(Z\ge\frac{\log r-n(\lambda_1-\alpha)}{s\sqrt{n}})\approx \frac{e^{-\frac{1}{2}(\frac{a_r^2}{n}-2a_rb+b^2n)}}{b\sqrt{n}-\frac{a_r}{\sqrt{n}}}\approx\frac{e^{a_rb-\frac{b^2n}{2}}}{b\sqrt{n}},$$
with the last approximation being true for large $n$. Since $b$ depends entirely on the choice of $\mu$ we see that a suitable choice for $\Psi(n)$ would be $b\sqrt{n}e^{\frac{b^2n}{2}}$ which leaves us with 
$$\bar{\nu}(D_{r,R})=\rho([r,R])=e^{a_Rb}-e^{a_rb}=R^{\frac{\lambda_1-\alpha}{s^2}}-r^{\frac{\lambda_1-\alpha}{s^2}}.$$
Using same reasoning as before we deduce $d\rho\propto r^{\frac{\lambda_1-\alpha}{s^2}-1}dr$ where $dr$ is the Lebesgue measure. Notice that when $\alpha>\lambda_1$ the resulting distribution $\bar{\nu}$ has $\bar{\nu}(B_r)=\infty$ and $\bar{\nu}(\bar{B}_r)<\infty$ for $B_r=\{x\in\mathbbm{R}^2\setminus\{0\}:x<r\}$ for any $r>0$. In the case $\alpha<\lambda_1$ we get $\bar{\nu}(\bar{B}_r)=\infty$ and $\bar{\nu}(B_r)<\infty$ for any $r>0$. $\alpha=\lambda_1$ is the unique case where both $\bar{\nu}(B_r)=\infty$ and $\bar{\nu}(\bar{B}_r)=\infty$.

\end{document}